\newcommand{\R}{\mathbb{R}}
\newcommand{\N}{\mathbb{N}}
\newcommand{\de}{\partial}
\renewcommand{\-}{\backslash}
\newcommand{\D}{\mathbb{D}}
\newcommand{\B}{\mathbb{B}}
\renewcommand{\a}{\alpha}
\newcommand{\f}{\varphi}
\newcommand{\e}{\varepsilon}
\newcommand{\transv}{\mathrel{\text{\tpitchfork}}}
\newcommand{\tpitchfork}{%
  \raise-0.1ex\vbox{
    \baselineskip\z@skip
    \lineskip-.52ex
    \lineskiplimit\maxdimen
    \m@th
    \ialign{##\crcr\hidewidth\smash{$-$}\hidewidth\crcr$\pitchfork$\crcr}
  }%
}
\newcommand{\vol}{\mathrm{vol}}
\newcommand{\RP}{\mathbb{R}\mathrm{P}}
\newcommand{\PP}{\mathbb{P}}
\renewcommand{\P}{\mathbb{P}}
\newcommand{\E}{\mathbb{E}}
\newcommand{\nrw}{\Rightarrow}
\newcommand{\spt}{\text{supp}}
\newcommand{\g}[3]{\mathcal{G}^{#1}(#2,\R^{#3})}
\newcommand{\Cr}[3]{\mathcal{C}^{#1}(#2,\R^{#3})}
\newtheorem{thm}{Theorem}
\newtheorem{lemma}[thm]{Lemma}
\newtheorem{cor}[thm]{Corollary}
\newtheorem{prop}[thm]{Proposition}
\newtheorem{claim}[thm]{Claim}
\theoremstyle{definition}
\newtheorem{defi}[thm]{Definition}
\newtheorem{remark}[thm]{Remark}
\newtheorem{example}[thm]{Example}
\newcommand{\be}{\begin{equation}}
\newcommand{\ee}{\end{equation}}
\numberwithin{equation}{section}
\title[Singularities of polynomial maps]{Maximal and typical topology of real polynomial singularities}
\author{Antonio Lerario and Michele Stecconi}
\begin{document}
\begin{abstract}Given a semialgebraic set $W\subseteq J^{r}(S^m, \R^k)$ and a polynomial map $\psi:S^m\to \R^k$ with components of degree $d$, we investigate the structure of the semialgebraic set $j^r\psi^{-1}(W)\subseteq S^m$ (we call such a set a ``singularity''). 

Concerning the upper estimate on the topological complexity of a polynomial singularity, we sharpen the classical bound $b(j^r\psi^{-1}(W))\leq O(d^{m+1})$, proved by Milnor \cite{Milnor}, with
\be\label{eq:abstract} b(j^r\psi^{-1}(W))\leq O(d^{m}),\ee
which holds for the generic polynomial map. 

For what concerns the ``lower bound'' on the topology of $j^r\psi^{-1}(W)$, we prove a general semicontinuity result for the Betti numbers of the zero set of $\mathcal{C}^0$ perturbations of smooth maps -- the case of $\mathcal{C}^1$ perturbations is the content of Thom's Isotopy Lemma (essentially the Implicit Function Theorem). This result is of independent interest and it is stated for general maps (not just polynomial); this result implies that small continuous perturbations of $\mathcal{C}^1$ manifolds have a richer topology than the one of the original manifold.

Keeping \eqref{eq:abstract} in mind, we compare the extremal case with a random one and prove that on average the topology of $j^r\psi^{-1}(W)$ behaves as the ``square root'' of its upper bound: for a random Kostlan map $\psi:S^m\to \R^k$ with components of degree $d$ and $W\subset J^{r}(S^m, \R^k)$ semialgebraic, we have:
\be \mathbb{E}b(j^{r}\psi^{-1}(W))=\Theta(d^{\frac{m}{2}}).\ee
This generalizes classical results of Edelman-Kostlan-Shub-Smale from the zero set of a random map, to the structure of its singularities.
\end{abstract}
\maketitle

\section{Introduction}
In this paper we deal with the problem of understanding the structure of the singularities of polynomial maps
\be \psi:S^m\to \R^k,\ee
where each component of $\psi=(\psi_1, \ldots, \psi_k)$ is the restriction to the sphere of a homogeneous polynomial of degree $d$. For us ``singularity'' means the set of points in the sphere where the $r$-jet extension $j^r\psi:S^n\to J^{r}(S^n, \R^k)$ meets a given semialgebraic set $W\subseteq J^{r}(S^n, \R^k).$ Example of these type of singularities are: zero sets of polynomial functions, critical points of a given Morse index of a real valued function or the set of Whitney cusps of a planar map.

Because we are looking at \emph{polynomial} maps, this problem has two different quantitative faces, which we both investigate in this paper. 

(1) From one hand we are interested in understanding the \emph{extremal} cases, meaning that, for fixed $m, d$ and $k$ we would like to know how complicated can the singularity be, at least in the generic case. 

(2) On the other hand, we can ask what is the \emph{typical} complexity of such a singularity. Here we adopt a measure-theoretic point of view and endow the space of polynomial maps with a natural Gaussian probability measure, for which it makes sense to ask about expected properties of these singularities, such as their Betti numbers.

\subsection{Quantitative bounds, the h-principle and the topology semicontinuity}
Measuring the complexity of  $Z=j^r\psi^{-1}(W)$ with the sum $b(Z)$ of its Betti numbers, problem (1) above means producing a-priori upper bounds for $b(Z)$ (as a function of $m, d, k$) as well as trying to realize given subsets of the sphere as $j^r\psi^{-1}(W)$ for some $W$ and some map $\psi$. 

For the case of the zero set $Z=\psi^{-1}(0)$ of a polynomial function $\psi:S^m\to\R$ of degree $d$, the first problem is answered by a Milnor's type bound\footnote{Milnor's bound \cite{Milnor} would give $b(Z)\leq O(d^{m+1})$, whereas \cite[Proposition 14]{LerarioJEMS} gives the improvement $b(Z)\leq O(d^m)$. In the context of this paper the difference between these two bounds is relevant, especially because when switching to the probabilistic setting it will give the so called ``generalized square root law''.} $b(Z)\leq O(d^m)$ and the second problem by Seifert's theorem: every smooth hypersurface in the sphere can be realized (up to ambient diffeomorphisms) as the zero set of a polynomial function. 

In the case of more general singularities, both problems are more subtle. The problem of giving a good upper bound on the complexity of $Z=j^r\psi^{-1}(W)$ will require us to develop a quantitative version of stratified Morse Theory for semialgebraic maps (Theorem \ref{thm:strat}). We use the word ``good'' because there is a vast literature on the subject of quantitative semialgebraic geometry, and it is not difficult to produce a bound of the form $b(Z)\leq O(d^{m+1})$; instead here (Theorem \ref{thm:bound} and Theorem \ref{thm:bound2}) we prove the following result.

\begin{thm}\label{thm:bettibound}For the generic polynomial map $\psi:S^m\to \R^k$ with components of degree $d$, and for $W\subseteq J^{r}(S^m, \R^k)$ semialgebraic, we have: 
\be \label{eq:estib}b(j^{r}\psi^{-1}(W))\leq O(d^m).\ee
(The implied constant depends on $W$.)
\end{thm}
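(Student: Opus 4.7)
The plan is to combine a genericity/transversality step, the paper's quantitative stratified Morse theorem (Theorem \ref{thm:strat}), and a projective Bezout count on the sphere that saves one factor of $d$ over the naive affine bound.

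First, fix once and for all a finite semialgebraic Whitney stratification $W = \bigsqcup_{\alpha} W_\alpha$, whose combinatorial data depend only on $W$ and not on $d$. Since $j^{r}\psi$ depends polynomially on the coefficients of $\psi$ and the polynomial family is full at every point of $S^m$ as soon as $d \geq r$, a parametric jet transversality argument shows that for generic $\psi$ the map $j^{r}\psi$ is transverse to every stratum $W_\alpha$. For such $\psi$, each $Z_\alpha := (j^{r}\psi)^{-1}(W_\alpha)$ is a smooth semialgebraic submanifold of $S^m$ of codimension $\mathrm{codim}\,W_\alpha$, and $Z = \bigsqcup_\alpha Z_\alpha$. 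Now choose a generic linear height function $\ell : S^m \to \R$. Theorem \ref{thm:strat} bounds $b(Z)$ by a uniform constant (depending only on the combinatorics of the stratification) times the total number of critical points of $\ell$ restricted to the strata:
\be b(Z) \leq C(W) \cdot \sum_{\alpha} \#\,\mathrm{crit}(\ell|_{Z_\alpha}). \ee

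The critical set of $\ell|_{Z_\alpha}$ is cut out on $S^m$ by a polynomial system of exactly $m$ equations: the defining equations of $W_\alpha$ pulled back by $j^{r}\psi$ (there are $\mathrm{codim}\,W_\alpha$ of them), together with the Lagrange conditions expressing that $\nabla \ell$ is a conormal covector to $Z_\alpha$ inside $S^m$ (there are $\dim Z_\alpha$ of them). Since the components of $\psi$ are homogeneous of degree $d$, so are their derivatives up to any fixed order, and hence the entries of this system are homogeneous polynomials of degree $O(d)$, with implied constant depending only on $W, m, k, r$. A projective Bezout estimate then bounds the number of isolated transverse solutions of $m$ homogeneous equations of degree $O(d)$ on $S^m$ by $O(d^m)$; summing over the finitely many strata gives \eqref{eq:estib}. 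The saving of one factor of $d$ over Milnor's affine bound $O(d^{m+1})$ is the reflection of the projective/spherical geometry of the ambient space: on $\R^{m+1}$ one would pay an extra power of $d$, but homogeneity of the $\psi_i$ lets us work in $\RP^m$ (equivalently $S^m$).

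The main obstacle is the degree control in the critical-point system: one must verify that the Lagrange conditions, which involve further derivatives of $j^{r}\psi$ and of the defining equations of the $W_\alpha$, do not inflate degrees beyond $O(d)$; in particular $r$ should enter only into the implied constant, never as a multiplicative factor of $d$. The transversality step is essentially formal once one knows the polynomial family is sufficiently rich, and the Bezout count is standard once the system is put in homogeneous form. The delicate point is uniformity: the constant $C(W)$ produced by Theorem \ref{thm:strat} must be independent of $d$, which is guaranteed precisely because the stratification is chosen once and for all, independently of $\psi$.
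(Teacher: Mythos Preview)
Your high-level strategy matches the paper's exactly: invoke Theorem~\ref{thm:strat} to bound $b(Z)$ by a constant (depending only on $W$) times the number of critical points of a generic auxiliary function on $Z=j^r\psi^{-1}(W)$, then bound that number by $O(d^m)$. The paper carries this out as Theorem~\ref{thm:bound2} (the Morse step) composed with Theorem~\ref{thm:bound} (the zero-dimensional count); it uses a degree-$d$ auxiliary polynomial $g$ rather than your linear $\ell$, but a linear height function would work just as well for the Morse step.

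The gap is in your point-count. The claim that $\mathrm{Crit}(\ell|_{Z_\alpha})$ is cut out on $S^m$ by ``exactly $m$ equations'' is not justified and in general false: a stratum $W_\alpha$ of a semialgebraic Whitney stratification need not be a complete intersection (its semialgebraic description may involve more equations than its codimension, plus inequalities), and the Lagrange condition that $\nabla\ell$ be conormal to $Z_\alpha$ is a rank-drop condition whose algebraic expression is the vanishing of \emph{all} maximal minors of a Jacobian-type matrix, not $\dim Z_\alpha$ scalar equations. Even if you had a square system, Morse nondegeneracy of the critical points does not make the underlying algebraic system transverse or zero-dimensional at those points, so a naive B\'ezout count is not available. (Your stated ``main obstacle'', degree inflation, is actually not the problem: minors of fixed-size matrices with entries of degree $O(d)$ still have degree $O(d)$.) The paper avoids all of this. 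Theorem~\ref{thm:strat}(1) together with Remark~\ref{rem:strat} packages the critical locus as $(j^{r+1}(\psi,g))^{-1}(\hat W)$ for a fixed semialgebraic $\hat W\subset J^{r+1}(S^m,\R^{k+1})$ of codimension $m$, and Theorem~\ref{thm:bound} then counts points of any such preimage by writing $\hat W$ in standard semialgebraic form $\bigcup_j(Z_j\cap A_j)$ with $Z_j$ algebraic and $A_j$ open, observing that each pullback $(j^{r+1}\Psi)^{-1}(Z_j)$ is an algebraic subset of $S^m$ defined by equations of degree $O(d)$, and invoking the bound $b_0\le O(d^m)$ for such sets from \cite{LerarioJEMS}. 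No complete-intersection or transversality hypothesis on the algebraic system is needed; one only uses that the finitely many points of $(j^{r+1}\Psi)^{-1}(\hat W)$ must lie among the \emph{isolated} points of the $Z_j$-preimages, which is forced by transversality to $\hat W$. Replacing your ``$m$ equations plus B\'ezout'' step with this $b_0$-argument repairs your proof.
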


In the case $W$ is algebraic we do not need the genericity assumption on $\psi$ for proving \eqref{eq:estib}, but in the general semialgebraic case some additional complications arise and this assumption allows to avoid them through the use of  Theorem \ref{thm:strat}. We believe, however, that \eqref{eq:estib} is still true even in the general case\footnote{In the algebraic case in fact one can use directly Thom-Milnor bound, but in the general semialgebraic case it is necessary first to ``regularize'' the semialgebraic set, keeping control on its Betti numbers. In the algebraic (or even the basic semialgebraic case) this is the procedure of Milnor \cite{Milnor}, in the general semialgebraic case it is not clear what this controlled regularization procedure would be. The nondegeneracy assumption on the jet allows us to avoid this step.}. Moreover, for our scopes the genericity assumption is not restrictive, as it fits in the probabilistic point of view of the second part of the paper, where a generic property is a property holding with probability one.

For what concerns the realizability problem, as simple as it might seem at first glance, given $W\subseteq J^{r}(S^m, \R^k)$ it is not even trivial to find a map $f:S^m\to \R^k$ whose jet is transversal to $W$ and such that $b(j^rf^{-1}(W))>0$ (we prove this in Corollary \ref{cor:topositive}). 

Let us try to explain carefully what is the subtlety here. In order to produce such a map, one can certainly produce a section of the jet bundle $\sigma:S^m\to J^{r}(S^m, \R^k)$ which is transversal to $W$ and such that $b(\sigma^{-1}(W))>0$ (this is easy). However, unless $r=0$, this section needs not to be holonomic, i.e. there might not exist a function $f:S^m\to \R^k$ such that $\sigma=j^{r}f$.  

We fix this first issue using an h-principle argument: the Holonomic Approximation Theorem \cite[p. 22]{eliash} guarantees that, after a small $\mathcal{C}^0$ perturbation of the whole picture, we can assume that there is a map $f:S^m\to \R^k$ whose jet $j^{r}f$ is $\mathcal{C}^0$ close to $\sigma$. 

There is however a second issue that one needs to address. In fact, if the jet perturbation was $\mathcal{C}^1$ small (i.e. if $\sigma$ and $j^rf$ were $\mathcal{C}^1$ close), Thom's Isotopy Lemma would guarantee that $\sigma^{-1}(W)\sim j^rf^{-1}(W)$ (i.e. the two sets are ambient diffeomorphic), but the perturbation that we get from the Holonomic Approximation Theorem is guaranteed to be only $\mathcal{C}^0$ small! To avoid this problem we prove the following general result on the semicontinuity of the topology of small $\mathcal{C}^0$ perturbations (see Theorem \ref{thm:semiconttop} below for a more precise statement).

\begin{thm}\label{thm:nonho}Let $S, J$ be smooth manifolds, $W\subseteq J$ be a closed cooriented submanifold 
 and $\sigma\in \mathcal{C}^{1}(S, J)$ such that $\sigma\pitchfork W$. Then for every $\gamma\in  \mathcal{C}^{1}(S, J) $ which is sufficiently close to $\sigma$ in the $\mathcal{C}^0$-topology and such that $\gamma\pitchfork W$, we have:
\be b(\gamma^{-1}(W))\geq b(\sigma^{-1}(W)).\ee
\end{thm}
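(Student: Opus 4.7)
The plan is to use a tubular neighborhood of $W$ in $J$ together with its pullback under $\sigma$ to reduce the inequality to a fiberwise $\mathbb{Z}/2$-degree computation, exploiting the fact that a small $\mathcal{C}^0$ perturbation of a linear isomorphism between disks still hits the origin.

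First I would set up a compatible pair of tubular neighborhoods. By the tubular neighborhood theorem pick an open $V\supseteq W$ in $J$ with disk-bundle projection $\pi_W\colon V\to W$. The transversality $\sigma\pitchfork W$ gives the canonical isomorphism $N_Z\cong(\sigma|_Z)^*N_W$ of normal bundles, where $Z:=\sigma^{-1}(W)$, and allows me to pick a tubular neighborhood $U\supseteq Z$ in $S$, with projection $\pi_Z\colon U\to Z$, such that $\sigma(U)\subseteq V$ and, on each fiber $F_z:=\pi_Z^{-1}(z)$, the restriction $\sigma|_{F_z}$ is a linear embedding onto a radial disk in $\pi_W^{-1}(\sigma(z))$ meeting $W$ transversally only at $z$. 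Since $\sigma(S\setminus U)\subseteq J\setminus V$ is at positive distance from $W$, the inclusion $\gamma^{-1}(W)\subseteq U$ holds for every $\gamma$ sufficiently $\mathcal{C}^0$-close to $\sigma$.

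The core step is a $\mathbb{Z}/2$-degree argument for the map $\rho:=\pi_Z|_{\gamma^{-1}(W)}\colon\gamma^{-1}(W)\to Z$. Under the assumption $\gamma\pitchfork W$, this $\rho$ is a proper $\mathcal{C}^1$ map between compact manifolds of the same dimension $\dim Z$, so its mod-$2$ degree is defined; I claim it equals $1$. For a regular value $z_0\in Z$ of $\rho$ one has $\rho^{-1}(z_0)=\gamma|_{F_{z_0}}^{-1}(W)$. Since $\sigma|_{F_{z_0}}$ is a linear disk through $\sigma(z_0)\in W$ meeting $W$ transversally only at $z_0$, the boundary restriction $\sigma|_{\partial F_{z_0}}$ has mod-$2$ linking number $1$ with $W$ inside $V$. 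For $\gamma$ sufficiently $\mathcal{C}^0$-close to $\sigma$, the restriction $\gamma|_{\partial F_{z_0}}$ is homotopic to $\sigma|_{\partial F_{z_0}}$ inside $V\setminus W$, hence has the same linking number $1$, and transversality of $\gamma|_{F_{z_0}}$ identifies this linking number with $|\rho^{-1}(z_0)|\bmod 2$.

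From here the conclusion is standard: a proper $\mathbb{Z}/2$-degree-$1$ map between compact manifolds of the same dimension induces, via Poincaré duality and the projection formula, an injective pullback $\rho^*\colon H^*(Z;\mathbb{Z}/2)\to H^*(\gamma^{-1}(W);\mathbb{Z}/2)$, whence $b(\gamma^{-1}(W))\geq b(Z)$. The hardest part lies in the first step: arranging $U$ so that $\sigma|_U$ is genuinely fiberwise linear, and then quantifying how $\mathcal{C}^0$-small $\gamma-\sigma$ must be in terms of the radius of $U$ (using compactness of $Z$ to make the bound uniform), so that the linking number inside $V\setminus W$ is preserved. The stratified extension mentioned in the footnote would require iterating this argument over the strata of $W$ together with suitably nested regular neighborhoods, by induction on the depth of the stratification.
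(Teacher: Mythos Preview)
Your proposal is correct and reaches the conclusion by a genuinely different route from the paper. The paper's argument (stated there with an added coorientation hypothesis on $W$) is a single Thom-class diagram chase: with nested tubular neighborhoods $E\supset B$ of $A=\sigma^{-1}(W)$ and with $\tilde A=\gamma^{-1}(W)\subset B$, the $\mathcal C^0$-closeness gives a homotopy from $\sigma$ to $\gamma$ through maps sending $S\setminus E$ into $J\setminus W$, so the Thom class $\phi\in H^r(J,J\setminus W)$ pulls back compatibly under both maps, and the commuting Thom isomorphisms produce an explicit left inverse to $\pi^*:H^*(A)\to H^*(\tilde A)$, hence an algebra splitting $H^*(\tilde A)\cong H^*(A)\oplus K$. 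Your fiberwise linking-number computation encodes exactly this same compatibility of Thom classes, repackaged as ``$\rho=\pi_Z|_{\tilde A}$ has mod-$2$ degree one'', followed by Poincar\'e duality and the projection formula. The paper's version is shorter---no fiberwise normalization of $\sigma|_U$ (which, incidentally, you do not literally need: it suffices that each $\sigma|_{F_z}$ meets $W$ transversally only at $z$ with $\sigma(\partial F_z)\subset V\setminus W$), no regular-value discussion, no separate duality step---and it delivers the ring splitting directly; your version has the advantage of working over $\mathbb Z/2$ from the outset, so the coorientability assumption is not needed. One point to tighten: you invoke compactness of $Z$ to get a well-defined degree and a uniform $\mathcal C^0$-bound, but the statement allows $S$ non-compact; the paper handles this by working in the strong Whitney $\mathcal C^0$ topology, and you would need to do the same (and verify properness of $\rho$) for full generality.
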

In particular we see that if small $\mathcal{C}^1$ perturbations of a regular equation preserve the topology of the zero set, still if we take just small $\mathcal{C}^0$ perturbations the topology of such zero set can only increase.

To apply Theorem \ref{thm:nonho} to our original question we consider $S=S^m$ and $J=J^{r}(S^m, \R^k)$, $W\subseteq J^{r}(S^m, \R^m)$ is the  semialgebraic set defining the singularity and $\sigma:S^m\to J^r(S^m, \R^k)$ is the (possibly non-holonomic) section such that $\sigma\pitchfork W$ and  $b(\sigma^{-1}(W))>0$. Moreover we can construct $\sigma$ in such a way that its image meets only a small (relatively compact and cooriented) subset of the smooth locus of $W$. Then for every $f\in  \mathcal{C}^{r+1}(S^m, \R^k) $ with $\tau=j^rf$ sufficiently close to $\sigma$ and such that $j^rf\pitchfork W$, we have:
\be \label{eq:app}b(j^rf^{-1}(W))\geq b(\sigma^{-1}(W))>0.\ee
(We will use the content of Corollary \ref{cor:topositive} and the existence of a function $f$ such that \eqref{eq:app} holds in the second part of the paper for proving the convergence of the expected Betti numbers of a random singularity.)

\subsection{The random point of view and the generalized square-root law}Switching to the random point of view offers a new perspective on these problems: from Theorem \ref{thm:bettibound} we have an extremal bound \eqref{eq:estib} for the complexity of polynomial singularities, but it is natural to ask how far is this bound from the typical situation. 
Of course, in order to start talking about randomness, we need to choose a probability distribution on the space of (homogeneous) polynomials. It is natural to require that this distribution is gaussian, centered, and that it is invariant under orthogonal changes of variables (in this way there are no preferred points or directions in the sphere). If we further assume that the monomials are independent, this distribution is unique (up to multiples), and called the \emph{Kostlan distribution}. 

To be more precise, this probability distribution is the measure on $\R[x_0,\dots,x_{m}]_{(d)}$ (the space of homogeneous polynomials of degree $d$) induced by the gaussian random polynomial:
\be \label{eq:kosdef}
P(x)=\sum_{ |\a|=d}\xi_\a\cdot \left(\frac{d!}{\alpha_0!\cdots\alpha_m!}\right)^{1/2}  x_0^{\alpha_0}\cdots x_m^{\alpha_m},
\ee
where $\{\xi_\a\}$ is a family of standard independent gaussian variables. A list of $k$ independent Kostlan polynomials $P=(P_1, \ldots, P_k)$ defines a random polynomial map:
\be \psi=P|_{S^m}\to \R^k.\ee
In particular, it is now natural to view such a $\psi$ as a random variable in the space $\mathcal{C}^{\infty}(S^m, \R^k)$ and to study the differential topology of this map, such as the behavior of its singularities, described a preimages of jet submanifolds $W\subseteq J^{r}(S^m, \R^k)$ in the previous section.

In this direction, it has already been observed by several authors, in different contexts, that random real algebraic geometry seems to behave as the ``square root'' of generic complex geometry. Edelman and Kostlan \cite{Ko90, EdelmanKostlan95} were the first to observe this phenomenon: a random Kostlan polynomial of degree $d$ in one variable has $\sqrt{d}$ many real zeroes, on average\footnote{In the notation of the current paper this correspond to the case of $\psi:S^1\to \R$ of degree $d$, whose expected number of zeroes is $2\sqrt{d}$. The multiplicative constant ``$2$'' appears when passing from the projective to the spherical picture}. Shub and Smale \cite{shsm} generalized this result and proved that the expected number of zeroes of a system of $m$ Kostlan equations of degrees $(d_1, \ldots, d_m)$ in $m$ variables is $\sqrt{d_1\cdots d_m}$ (the bound coming from complex algebraic geometry would be $d_1\cdots d_m$).

Moving a bit closer to topology, B\"urgisser \cite{buerg:07} and Podkorytov \cite{Podkorytov} proved that the expectation of the Euler characteristic of a random Kostlan algebraic set has the same order of the square-root of the Euler characteristic of its complex part (when the dimension is even, otherwise it is zero).
A similar result for the Betti numbers has also been proved by Gayet and Welschinger \cite{GaWe1, GaWe2, GaWe3}, and by Fyodorov, Lerario and Lundberg \cite{FLL} for invariant distributions. 

Using the language of the current paper, these results correspond to the case of  a polynomial map $\psi:S^m\to \R^k$ and to the ``singularity'' $Z=j^0\psi^{-1}(W)$, where 
\be W=S^m\times \{0\}\subset J^{0}(S^m, \R^k)=S^m\times \R^k\ee
 and $j^0\psi(x)=(x, \psi(x))$ is the section given by the map $\psi$ itself. Here we generalize these results and prove that a similar phenomenon is a very general fact of Kostlan polynomial maps.
 
 \begin{thm}\label{thm:bettiorderintro}
Let $W\subset J^r(S^m,\R^k)$ be a closed intrinsic\footnote{We say that $W\subset J^r(S^m,\R^k)$ is intrinsic if it is invariant under diffeomorphisms of $S^m$, see Definition \ref{def:intrinsic}. This property it is satisfied in all natural examples.} semialgebraic set of positive codimension. If $\psi:S^m\to \R^k$ is a random Kostlan polynomial map, then
\be \label{eq:ineqbetti2}
\E b(j^r\psi^{-1}(W))=\Theta(d^{\frac{m}{2}}).\footnote{We write $f(d)=\Theta(g(d))$ if there exist constants $a_1, a_2>0$ such that $a_1 g(d)\leq f(d)\leq a_2f(d)$ for all $d\geq d_0$ sufficiently large.}
\ee
(The implied constants depend on $W$.)

\end{thm}

We call the previous Theorem \ref{thm:bettiorderintro} the ``generalized square root law'' after comparing it with the extremal inequality $b(j^{r}\psi^{-1}(W))\leq O(d^m)$
 from Theorem \ref{thm:bettibound}, whose proof is ultimately based on bounds coming from complex algebraic geometry\footnote{The reader can now appreciate the estimate $O(d^m)$ instead of $O(d^{m+1})$ from Theorem \ref{thm:bettibound}.}. In the case $W$ has codimension $m$ (i.e. when we expect $j^r\psi^{-1}(W)$ to consist of points), we actually sharpen \eqref{eq:ineqbetti2} and get the explicit asymptotic to the leading order, see Theorem \ref{thm:sqrlaw} below. Moreover, a similar result holds for every fixed Betti number $b_i(j^r\psi^{-1}(W))$ when $i$ is in the range $0\leq i\leq m-\mathrm{codim}(W),$ see Theorem \ref{thm:bettiorder} for a detailed statement.
  
\begin{remark}The ingredients for the proof of Theorem \ref{thm:bettiorderintro} are: Theorem \ref{thm:strat} for the upper bound and Corollary \ref{cor:topositive} for the lower bound. The main property that we use in this context is the fact that a Kostlan map $\psi:S^m\to \R^k$ has a rescaling limit when restricted to a small disk $D_d=D(x, d^{-1/2})$ around any point $x\in S^m$. In other words, one can fix a diffeomorphism $a_d:\mathbb{D}^m\to D_d$ of the standard disk $\mathbb{D}^m$ with the small spherical disk $D(x, d^{-1/2})\subset S^m$ and see that the sequence of random functions:
\be X_d=\psi\circ a_d:\mathbb{D}^m\to \R^k\ee
converges to the Bargmann-Fock field, see Theorem \ref{thm:Kostlan}. 
In a recent paper \cite{DTGRF1} we introduced a general framework for dealing with random variables in the space of smooth functions and their differential topology -- again we can think of $X_d\in C^{\infty}(\mathbb{D}^m, \R^k)$ as a sequence of random variables of this type. The results from \cite{DTGRF1}, applied to the setting of random Kostlan polynomial maps are collected in Theorem \ref{thm:Kostlan} below, which lists the main properties of the rescaled Kostlan polynomial $X_d$. Some of these properties are well-known to experts working on random fields, but some of them seem to have been missed. Moreover, we believe that our language is more flexible and well-suited to the setting of differential topology, whereas classical references look at these random variables from the point of view of functional analysis and stochastic calculus.

Of special interest from Theorem \ref{thm:Kostlan} are properties (2), (5) and (7), which are closely related. In fact (2) and (5) combined together tells that open sets $U\subset \mathcal{C}^{\infty}(\mathbb{D}^m, \R^k)$ which are defined by open conditions on the $r$-jet of $X_d$, have a positive limit probability when $d\to \infty$. Property (7), tells that the law for Betti numbers of a random singularity $Z_d=j^rX_d^{-1}(W)$ has a limit. (Even in the case of zero sets this property was not noticed before, see Example \ref{ex:welsch}.)

We consider Theorem \ref{thm:Kostlan} as a practical tool that people interested in random algebraic geometry can directly use, and we will show how to concretely use this tool in a list of examples that we give in Appendix \ref{sec:examples}.
\end{remark}

\begin{remark}The current paper, and in particular the generalized square-root law Theorem \ref{thm:bettiorderintro}, complement recent work of Diatta and Lerario \cite{DiattaLerario} and Breiding, Keneshlou and Lerario \cite{BKL}, where tail estimates on the probabilities of the maximal configurations are proved. 
\end{remark}
\subsection{Structure of the paper}In Section \ref{sec:strat} we prove a quantitative semialgebraic version of stratified Morse Theory, which is a technical tool needed in the sequel, and in Section \ref{sec:quantitative} we prove Theorem \ref{thm:bound} and Theorem \ref{thm:bound2} (whose combination give Theorem \ref{thm:bettibound}).  In section \ref{sec:holo} we discuss the semicontinuity of topology under holonomic approximation and prove Theorem \ref{thm:semiconttop} (which is Theorem \ref{thm:nonho} from the Introduction). In Section \ref{sec:RAG} we introduce the random point of view and prove the generalized square-root law. Appendix 1 contains three short examples of use the random techniques.
\subsection{Acknowledgements} The author wish to thank the referee for her/his careful reading and her/his helpful and constructive comments.
 \section{Quantitative bounds, the h-principle and the topology semicontinuity}
 \subsection{Stratified Morse Theory}\label{sec:strat}
 
Let us fix a Whitney stratification $W=\sqcup_{S\in \mathscr{S}}S$ (see \cite[p. 37]{GoreskyMacPherson} for the definition) of the semialgebraic subset $W\subset J^r(S^m,\R^k)=:J$, with each stratum $S\in \mathscr{S}$ being semialgebraic and smooth (such decomposition  exists \cite[p. 43]{GoreskyMacPherson}), so that, by definition a smooth map $f\colon M\to J$, is transverse to $W$ if $f\transv S$ for all strata $S\in \mathscr{S}$. 
 When this is the case, we write $\psi\transv W$ and implicitly consider the subset $\psi^{-1}(W)\subset M$ to be equipped with the Whitney stratification given by $\psi^{-1}\mathscr{S}=\{\psi^{-1}(S)\}_{S\in\mathscr{S}}$.
 \begin{defi}\label{def:mors}
Given a Whitney stratified subset $Z=\cup_{i\in I}S_i$ of a smoooth manifold $M$ (without boundary), we say that a function $g\colon Z\to \R$ is a Morse function if $g$ is the restriction of a smooth function $\tilde{g}\colon M\to \R$ such that
\begin{enumerate}[(a)]
\item $g|_{S_i}$ is a Morse function on $S_i$. 
\item For every critical point $p\in S_i$ and every generalized tangent space $Q\subset T_pM$ (defined as in \cite[p. 44]{GoreskyMacPherson})
we have $d_p\tilde{g}(Q)\neq 0$, except for the case $Q=T_pS_i$.
\end{enumerate}
\end{defi}
Note that the condition of being a Morse function on a stratified space $Z\subset M$ depends on the given stratification of $Z$.
\begin{remark}
The definition above is slightly different than the one given in the book \cite[p. 52]{GoreskyMacPherson} by Goresky and MacPherson, where a Morse function, in addition, must be proper and have distinct critical values.
\end{remark}
%
The following theorem is the quantitative version of stratified Morse theory for semialgebraic maps we need in order to prove Theorem \ref{thm:bettibound}.
\begin{thm}\label{thm:strat}
Let $W\subset J$ be a semialgebraic subset of a real algebraic smooth manifold $J$, with a given semialgebraic Whitney stratification $W=\sqcup_{S\in \mathscr{S}}S$ and let $M$ be a real algebraic smooth manifold. 
There exists a semialgebraic subset $\hat{W}\subset J^{1}(M,J\times \R)$ having codimension larger or equal than $\dim M$, equipped with a semialgebraic Whitney stratification that satisfies the following properties with respect to any couple of smooth maps $\psi\colon M\to J$ and $g\colon M\to \R$.
\begin{enumerate}
\item 
If $\psi\transv W$and $j^1(\psi,g)\transv \hat{W}$, then $g|_{\psi^{-1}(W)}$ is a Morse function with respect to the stratification $\psi^{-1}\mathscr{S}$ and
\be \label{eq:stratcrit}
\text{Crit}(g|_{\psi^{-1}(W)})=\left(j^{1}(\psi,g)\right)^{-1}(\hat{W}).
\ee

\item
There is a constant $N_W>0$ depending only on $W$ and $\mathscr{S}$, such that if $\psi^{-1}(W)$ is compact, $\psi\transv W$ and $j^1(\psi,g)\transv \hat{W}$, then
\be 
b_i(\psi^{-1}(W))\le N_W\#\text{Crit}(g|_{\psi^{-1}(W)}),
\ee
for all $i=0,1,2\dots$
\end{enumerate}
\end{thm}

\begin{proof}
Let $S\in \mathscr{S}$ be a stratum of $W$, hence $S\subset J$ is a smooth submanifold and since $\psi\transv W$ implies that $\psi\transv S$, we also have that $\psi^{-1}(S)$ is a submanifold of $M$ of the same codimension which we denote by $k$. Define
\be 
\begin{aligned}
\hat{S}&=\{j^1_p(F,f)\in J^1(M,J\times\R)\colon F(p)\in S \text{ and }d_pf\in d_pF^*(T_{F(p)}S^\perp)\}\\
&=\{j^1_p(F,f)\in J^1(M,J\times\R)\colon F(p)\in S \text{ and }\exists \lambda\in T_{F(p)}S^\perp \text{ s.t. } d_pf=\lambda\circ d_pF\}.
\end{aligned}
\ee
Orthogonality here is meant in the sense of dual vector spaces: if $Q\subset T$ are vector spaces, then $Q^\perp=\{\xi\in T^*\colon \xi(Q)=0\}$.

It is clear, by this definition, that $\hat{S}$ is semialgebraic and its codimension is equal to the dimension of $M$. 
\begin{claim}\label{claim:stratproof1}
$j^1_{p_0}(\psi,g)\in \hat{S}$ if and only if $p_0$ is a critical point for $g|_{\psi^{-1}(S)}$.
\end{claim}
If $j^1_{p_0}(\psi,g)\in \hat{S}$, then of course $p_0\in \psi^{-1}(S)$ and there exists a (Lagrange multiplier) conormal covector $\lambda\in T_{\psi(p_0)}S^\perp$ such that $d_{p_0}g=\lambda\circ d_{p_0}\psi$. It follows that $d_{p_0}g$ vanishes on $T_{p_0}\psi^{-1}(S)=d_{p_0}\psi^{-1}(T_{\psi(p_0)}S)$. This proves the ``only if'' statement of the Claim as a consequence of the following inclusion
\be 
d_{p_0}\psi^*\left(T_{p_0}S^\perp\right)\subset\left(T_{p_0}\psi^{-1}(S)\right)^\perp.
\ee

To conclude the proof of Claim \ref{claim:stratproof1} we need to show the opposite inclusion. We do this by showing that the dimensions of the two spaces are equal. First observe that, since by hypotheses $\psi\transv S$, the image $d_{p_0}\psi$ is a complement to $T_{\psi(p_0)}S$ in $T_{\psi(p_0)}J$ and this is equivalent (it is the dual statement) to say that the restriction of $d_{p_0}\psi^*$ to $(T_{\psi(p_0)}S)^\perp$ is injective. It follows that
\be 
\begin{aligned}
\dim d_{p_0}\psi^*\left(T_{p_0}S^\perp\right)
&=
\dim \left(T_{p_0}S^\perp\right)=\\
&=
\mathrm{codim}\ S =\\
&=\mathrm{codim}\ \psi^{-1}(S)=\\
&= \dim \left(T_{p_0}\psi^{-1}(S)\right)^\perp.
\end{aligned}
\ee
This concludes the proof of Claim \ref{claim:stratproof1}.

\begin{claim}\label{claim:stratproof2}
Given a Whitney stratification of $\hat{S}$, and a critical point $p_0\in M$ of the map $g|_{\psi^{-1}(S)}$, if $j^1(\psi,g)\transv \hat{S}$ at $p_0$ then this critical point is Morse.
\end{claim}
Let us pass to a coordinate chart $\phi$ defined on a nighborhood $\mathcal{U}\subset J^1(M,J\times \R)$ of $j^1_{p_0}(\psi,g)$:
\be 
\phi=\left(
x=\begin{pmatrix}
x^1 \\ x^2
\end{pmatrix},
y=\begin{pmatrix}
y^1 \\ y^2
\end{pmatrix},
a,
Y=\begin{pmatrix}
Y^1 \\ Y^2
\end{pmatrix},
A
\right)
\colon \mathcal{U}\to \R^m\times \R^{s+k}\times \R\times \R^{(s+k)\times m}\times \R^m
\ee
\be 
j^1_p(F,f)\mapsto \left(x(p),y(F(p)),g(p),\frac{\de (y\circ F)}{\de x},\frac{\de g}{\de x}\right);
\ee
where $y^2=0$ is a local equation for $S$ and $x^2=0$ is a local equation for $\psi^{-1}(S)$. Indeed, by the implicit function theorem (applied to the map $\psi$ in virtue of the transversality assumption $\psi\transv S$) we can assume that $y^2(\psi(x^1,x^2))=x^2.$ In this coordinate chart we have that the restriction of $d_{p}F^*$ to the space ${T_{\psi(p)}S^\perp}$ is represented by the matrix $({Y}^2)^T$, thus 
\be \label{eq:shat}
\hat{S}\cap \mathcal{U}=\left\{y^2=0; A\in \mathrm{Im}\left((Y^2)^T\right)\right\}\cap \phi(\mathcal{U}).
\ee
Let us denote by $x\mapsto(x,\tilde{y}(x),\tilde{a}(x),\tilde{Y}(x),\tilde{A}(x))$ the local expression of the jet map $p\mapsto j^1_p(\psi,g)$ with respect to the above coordinates.
By construction we have that \be \left(\tilde{Y}^2(p_0)\right)^T=\begin{pmatrix} 0 \\ \mathbbm{1}_k\end{pmatrix}.\ee In particular the image of the above matrix is a complement to the subspace spanned by the first $m-k$ coordinates
and we may assume, reducing the size of the neighborhood if needed, that this property holds for every element 
$(x,y,a,Y,A)\in\phi(\mathcal{U})$, so that there exist unique vectors $\lambda\in\R^k$ and $\xi\in\R^{(m-k)}$ such that
\be \label{eq:cheduecoioni}
A=\begin{pmatrix}
A_1 \\ A_2
\end{pmatrix}=\left(Y^2\right)^T \lambda+\begin{pmatrix}
\xi\\ 0
\end{pmatrix}.
\ee 
Now, this defines a smooth function $\xi\colon\mathcal{U}\to \R^k$ such that the equations $y^2=0$; $\xi=0$ are smooth regular equations for $\phi(\hat{S}\cap\mathcal{U})$. 

Notice that this ensures that $\phi(\mathcal{U})$ intersects only the smooth locus of $\hat{S}$. Now, since by hypotheses $j^1(\psi,g)$ is transverse to all the strata of $\hat{S}$ then it must be transverse to the smooth locus in the usual sense, even if the latter is a union of strata (this follows directly from the definition of transversality). Therefore, while proving Claim \ref{claim:stratproof2}, we are allowed to forget about the stratification of $\hat{S}$ and just assume that the map $j^1(\psi,g)$ is transverse to the smooth manifold $\hat{S}\cap \phi(\mathcal{U})$ in the usual sense.

In this setting we can see that if $j^1(\psi,g)\transv \hat{S}$ at $p_0$, then the following matrix has to be surjective:
\be 
\begin{pmatrix}
d y^2 \\ d\xi
\end{pmatrix}\circ d_{p_0}\left(j^1(\psi,g)\right)=
\begin{pmatrix}
0 & \mathbbm{1}_k \\
\frac{\de\tilde{\xi}}{\de x^1}(p_0) &
\frac{\de\tilde{\xi}}{\de x^2}(p_0)
\end{pmatrix}\in \R^{(k+k)\times ((m-k) +k)},
\ee
where $\tilde{\xi}(x)=\xi(x,\tilde{y}(x),\tilde{a}(x),\tilde{Y}(x),\tilde{A}(x))$. 
Therefore the lower left block $\frac{\de\tilde{\xi}}{\de x^1}(p_0)$ is surjective as well and hence invertible. This concludes our proof of Claim \ref{claim:stratproof2} since such matrix is in fact the hessian of the map $g|_{\psi^{-1}(S)}$ at the critical point $p_0$:
\be 
\begin{aligned}
d_{p_0}\left(g|_{\psi^{-1}(S)}\right)&=
\frac{\de}{\de x^1}\Big|_{p_0}\left(\frac{\de g}{\de x^1}\right)
=\frac{\de \tilde{A}_1}{\de x^1}(p_0)= \frac{\de\tilde{\xi}}{\de x^1}(p_0).
\end{aligned}
\ee
The last equality is due to the equation \eqref{eq:cheduecoioni} combined with the observation that $\tilde{Y^2}$ is of the form $\begin{pmatrix}0 &*\end{pmatrix}$ for all $p$ in a neighborhood of $p_0$, since $\frac{\de \tilde{y}^2}{\de x_1}(p)=0$. 

At this point, Claim \ref{claim:stratproof1} and Claim \ref{claim:stratproof2} prove that, for whatever stratification of $\hat{S}$, if $j^1(\psi,g)\transv \hat{S}$ and $\psi\transv S$ then $g|_{\psi^{-1}(S)}$ is a Morse function and that its critical set coincide with the set $\left(j^1(\psi,g)\right)^{-1}(\hat{S})$, so that condition $(a)$ of Definition \ref{def:mors} is satisifed along the stratum $S$. In order to establish when $g|_{\psi^{-1}(W)}$ is a Morse function along the stratum $\psi^{-1}(S)$ on the stratified manifold $W$, in the sense of Definition \ref{def:mors}, we now need to prove a similar statement to ensure condition $(b)$.

Let us consider the set $D_qS$ of degenerate covectors at a point $q\in S$ that are conormal to $S$ (conormal and degenerate covectors are defined as in \cite[p.44]{GoreskyMacPherson}), in other words:
\be 
D_qS=\{\xi\in T^*_q{J}\colon \xi\in T_qS^\perp,\  \xi\in Q^\perp \text{ for some $Q$ generalized tangent space at $q$}\}. 
\ee
It is proved in \cite[p.44]{GoreskyMacPherson} that $DS=\cup_{q\in S}D_qS$ is a semialgebraic subset of codimension greater than $1$ of the conormal bundle $TS^\perp$\footnote{$TS^\perp =T_S^*J$ , in the notation of \cite{GoreskyMacPherson}.} to the stratum S.
We claim that the subset $D\hat{S}\subset \hat{S}$ containing the jets that do not satisfy condition $(b)$ of Definition \ref{def:mors} has the following description:
\be 
D\hat{S}=\{j^1_p(F,f)\in J^1(M,J\times\R)\colon F(p)\in S \text{ and }d_pf\in d_pF^*(D_{F(p)}S)\}.
\ee
In fact, since $\psi\transv W$, then all the generalized tangent spaces of the stratified subset $\psi^{-1}(W)\subset M$ at a point $p\in \psi^{-1}(S)$ are of the form $d_p\psi^{-1}(Q)$. It follows that if a conormal covector $d_pg=\lambda \circ d_p\psi$ is degenerate then $\lambda\in D_{\psi(p)}S$.

Note that $D\hat{S}$ is a subset of $\hat{S}$ of codimension $\ge 1$, thus the codimension of $D\hat{S}$ in $J^1(M,J\times \R)$ is $\ge m+1$. As a consequence we have that $j^1(\psi,g)\transv D\hat{S}$ if and only if $ j^1(\psi,g)\notin D\hat{S}$.
Therefore if $j^1(\psi,g)\transv \hat{S}$ and $j^1(\psi,g)\notin D\hat{S}$ then $\psi\transv S$ and $g|_{\psi^{-1}(W)}$ is a Morse function on $\psi^{-1}(W)$ along the stratum $\psi^{-1}(S)$.

We are now ready to define $\hat{W}=\cup_{S\in\mathscr{S}}\hat{S}$. An immediate consequence of Claim \ref{claim:stratproof1} is that $\hat{W}$ satisfies equation \eqref{eq:stratcrit}. Moreover, since $\hat{S}\supset D\hat{S}$ are semialgebraic, $\hat{W}$ is semialgebraic and admits a semialgebraic Whitney stratification $\hat{\mathscr{S}}$ (refining the one of $\hat{S}$) such that all the subsets $\hat{S}$ and $D\hat{S}$ are unions of strata. With such a stratification, if the jet map $j^1(\psi,g)$ is transverse to $\hat{W}$ then, for each stratum $S\in \mathscr{S}$, it is also transverse to $\hat{S}$ and it avoids the set $D\hat{S}$, so that $g|_{\psi^{-1}(W)}$ is a Morse function, in the sense of Definition \ref{def:mors}.
This proves that $\hat{W}$ satisfies condition $(1)$ of the Theorem.

Let us prove condition (2). Let $Z=\psi^{-1}(W)\subset M$ be compact. Without loss of generality we can assume that each of the critical values $c_1,\dots, c_n$ of $g|_{Z}$ corresponds to only one critical point (this can be obtained by makingcontaining the jets that do not satisfy condition $(b)$ of Definition \ref{def:mors}: a $\mathcal{C}^1$ small perturbation of $g$, which won't affect the number of its critical points). Consider a sequence of real numbers $a_1,\dots a_{n+1}$ such that
\be 
a_1<c_1 <a_2<c_2<\dots <a_{n}<c_n<a_{n+1}.
\ee
By the main Theorem of stratified Morse theory \cite[p. 8, 65]{GoreskyMacPherson}, there is an homeomorphism
\be 
Z\cap \{g\le a_{l+1}\}\cong(Z\cap \{g\le a_l\})\sqcup_B A,
\ee
with
\be 
(A,B)= TMD_p(g)\times NMD_p(g),
\ee
where $TMD_p(g)$ is the tangential Morse data and $NMD_p(g)$ is the normal Morse data. A fundamental result of classical Morse theory is that the tangential Morse data is homeomorphic to a pair 
\be 
TMD_p(g)\cong(\mathbb{D}^\lambda\times \mathbb{D}^{m-\lambda},(\de\mathbb{D}^\lambda)\times \mathbb{D}^{m-\lambda}),
\ee 
while the normal Morse data is defined as the local Morse data of $g|_{N_p}$ for a normal slice (see \cite[p. 65]{GoreskyMacPherson}) at $p$. A consequence of the transversality hypothesis $\psi\transv W$ is that there is a small enough normal slice $N_p$ such that $\psi|_ {N_p}\colon N_p\to J$ is the embedding of a normal slice at $\psi(p)$ for $W$. Therefore the normal data $NMD_p(g)$ belongs to the set $\nu(W)$ of all possible normal Morse data that can be realized (up to homeomeorphisms) by a critical point of a Morse function on $W$. By Corollary $7.5.3$ of \cite[p. 95]{GoreskyMacPherson} it follows that the cardinality of the set $\nu(W)$ is smaller or equal than the number of connected components of the semialgebraic set $\cup_{S\in \mathscr{S}}(TS^\perp\-DS)$, hence finite\footnote{In the book this is proved only for any fixed point $p$, as a corollary of Theorem $7.5.1$ \cite[p.93]{GoreskyMacPherson}. However the same argument generalizes easily to the whole bundle.}.
Let 
\be 
N_W:=\max_{Y \in \nu(W),\ \lambda \in \{0,\dots, m\}} b_i\left(\left(\mathbb{D}^\lambda\times \mathbb{D}^{m-\lambda},(\de\mathbb{D}^\lambda)\times \mathbb{D}^{m-\lambda}\right)\times Y\right)\in \N.
\ee
From the long exact sequence of the pair $\left(Z\cap \{g\ge a_{l+1}\},(Z\cap \{g\ge a_{l}\}\right)$ we deduce that
\be\label{eq:lesbetti}\begin{aligned}
b_i(Z\cap \{g\le a_{l+1}\})-b_i(Z\cap \{g\le a_{l}\})&\le b_i\left(Z\cap \{g\le a_{l+1}\},Z\cap \{g\le a_{l}\}\right)\\
&= b_i\left(A,B\right) \\
&= b_i\left(TMD_p(g)\times NMD_p(g)\right) \\
&\le N_W.
\end{aligned}
\ee
Since $Z$ is compact, the set $Z\cap \{g\le a_1\}$ is empty, hence by repeating the inequality \eqref{eq:lesbetti} for each critical value, we finally get
\be 
b_i(Z)=b_i(Z\cap g\le a_{n+1})\le N_Wn=N_W\#\text{Crit}\left(g|_{\psi^{-1}(W)}\right).
\ee
This concludes the proof of Theorem \ref{thm:strat}.
\end{proof}
Below we will restrict to those semialgebraic sets $W\subset J^{r}(S^m, \R^k)$ that have a differential geometric meaning, as specified in the next definition.
\begin{defi}\label{def:intrinsic}
A submanifold $W\subset J^r(M,\R^k)$ is said to be \emph{intrinsic} if there is a submanifold $W_0\subset J^r(\mathbb{D}^m,\R^k)$, called the \emph{model}, such that for any embedding $\f\colon \mathbb{D}^m\hookrightarrow M$, one has that $j^r\f^*(W)=W_0$, where 
\be 
j^r\f^*\colon J^r\left(\f(\mathbb{D}^m),\R^k\right)\xrightarrow{\cong}J^r\left(\mathbb{D}^m,\R^k\right), \qquad j^r_{\f(p)}f\mapsto j^r_p(f\circ\f ).
\ee
\end{defi}
Intrinsic submanifolds are, in other words, those that have the same shape in every coordinate charts, as in the following examples.
\begin{enumerate}
    \item $W=\{j^r_pf\colon f(p)=0\}$;
    \item $W=\{j^r_pf\colon j^sf(p)=0\}$ for some $s\le r$;
    \item $W=\{j^r_pf\colon \text{rank}(df(p))=s\}$ for some $s\in\N$.
\end{enumerate}
\begin{remark}\label{rem:strat}
In the case when $J=J^{r}(M,\R^k)$ we can consider $\hat{W}$ to be a subset of $ J^{r+1}(M,\R^{k+1})$ taking the preimage via the natural submersion
\be 
J^{r+1}(M,\R^{k+1})\to J^1\left(M,J^{r}(M,\R^k)\times\R \right), \qquad j^{r+1}(f,g)\mapsto j^1(j^rf,g).
\ee
In this setting Theorem \ref{thm:strat} can be translated to a more natural statement by considering $\psi$ of the form $\psi=j^rf$.
Moreover, in this case, observe that if $W$ is intrinsic (in the sense of Definition \ref{def:intrinsic} below), then $\hat{W}$ is intrinsic as well.
\end{remark}

\subsection{Quantitative bounds}\label{sec:quantitative}
In this section we prove Theorem \ref{thm:bettibound}, which actually immediately follows by combining Theorem \ref{thm:bound} and Theorem \ref{thm:bound2}.


Next theorem gives a deterministic bound for on the complexity of $Z=j^r\psi^{-1}(W)$ when the codimension of $W$ is $m$.
\begin{thm}\label{thm:bound}
Let $P\in \R[x_0, \ldots, x_m]_{(d)}^k$ be a polynomial map and consider its restriction $\psi=P|_{S^m}$ to the unit sphere:
\be \psi:S^m\to \R^k.\ee
Let also $j^r\psi:S^m\to J^r(S^m, \R^k)$ be the associated jet map and $W\subset J^{r}(S^m, R^k) $ be a semialgebraic set of codimension $m$. There exists a constant $c>0$ (which only depends on $W$, $m$ and $k$) such that, if $j^r\psi\transv W$,  then:
\be\label{eq:detbound} \#j^r\psi^{-1}(W)\leq c \cdot d^m.\ee
\end{thm}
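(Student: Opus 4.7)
The plan is to stratify $W$ and reduce to a projective Bezout count on each codimension-$m$ stratum. First, fix a semialgebraic Whitney stratification $W=\sqcup_{S\in\mathscr{S}}S$ into finitely many smooth semialgebraic strata. Since $W$ has codimension $m$, each stratum has codimension $\geq m$, and $j^r\psi\transv W$ implies $j^r\psi\transv S$ for every $S\in\mathscr{S}$. A dimension count then gives $j^r\psi^{-1}(S)=\emptyset$ when $\operatorname{codim}S>m$, and $j^r\psi^{-1}(S)$ is finite when $\operatorname{codim}S=m$. Since $|\mathscr{S}|<\infty$, it suffices to establish $\#j^r\psi^{-1}(S)\leq c_S\,d^m$ for each codimension-$m$ stratum and then sum.

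For a fixed such stratum $S$, replace $S$ by its Zariski closure $\bar S\subset J^r(S^m,\R^k)$, a real algebraic variety; this only increases the point count. Using the natural projection $\pi\colon J^r(\R^{m+1},\R^k)|_{S^m}\to J^r(S^m,\R^k)$, the preimage $\pi^{-1}(\bar S)$ is real algebraic and, on a finite semialgebraic cover, is cut out by $m$ polynomial equations $F_1,\ldots,F_m$ of degrees bounded by a constant $D_W$ depending only on $W$. The ambient jet extension $j^rP\colon\R^{m+1}\setminus\{0\}\to J^r(\R^{m+1},\R^k)$ has components given by partial derivatives of $P$ up to order $r$, which are polynomials on $\R^{m+1}$ of degree at most $d$. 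Hence each $F_i\circ j^rP$ is a polynomial on $\R^{m+1}$ of degree $\leq d\,D_W$, and $j^r\psi^{-1}(\bar S)$ is exactly the set of $x\in\R^{m+1}$ satisfying the $(m+1)$-equation polynomial system $\{|x|^2-1=0\}\cup\{F_i\circ j^rP=0\}_{i=1}^m$.

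Finally, I would invoke complex Bezout. The transversality $j^r\psi\transv S$ on $S^m$ translates into real-transversality of this $(m+1)$-system in $\R^{m+1}$ at each of its real zeros. A small generic real perturbation of $F_1,\ldots,F_m$ keeps the real zero count non-decreasing (each real-transverse zero persists by the implicit function theorem) while rendering the perturbed complex zero set $0$-dimensional. Applying classical Bezout in $\CP^{m+1}$ to the $m+1$ homogenized equations of degrees $2,d\,D_W,\ldots,d\,D_W$ then bounds the total number of complex zeros — and hence the real count — by $2\,(d\,D_W)^m=O(d^m)$. Summing over the finitely many codimension-$m$ strata yields $\#j^r\psi^{-1}(W)\leq c\cdot d^m$ with $c$ depending only on $W$, $m$, $k$. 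The delicate step is the perturbation argument ensuring the applicability of complex Bezout despite the possibility of a positive-dimensional complex zero locus for the unperturbed system; it is essential to use projective Bezout rather than Milnor's bound in $\R^{m+1}$ directly, since the latter would yield only $O(d^{m+1})$, missing precisely the improvement that this theorem is designed to achieve.
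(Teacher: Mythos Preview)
There is a genuine gap at the step where you assert that $\pi^{-1}(\bar S)$ ``on a finite semialgebraic cover, is cut out by $m$ polynomial equations $F_1,\ldots,F_m$.'' A real algebraic variety of codimension $m$ is in general \emph{not} a set-theoretic complete intersection, even locally; the Zariski closure $\bar S$ of a smooth codimension-$m$ stratum will typically require more than $m$ polynomial equations to define it. Without exactly $m$ equations you cannot run the $(m+1)$-hypersurface B\'ezout count $2(dD_W)^m$, and adding further equations of degree $\sim d$ does not obviously preserve the $O(d^m)$ target. The phrase ``on a finite semialgebraic cover'' does not rescue this: if the $F_i$ vary from piece to piece they are no longer global polynomials to which projective B\'ezout applies, and if they are global there is no reason they cut out $\bar S$ on the nose. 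A related issue is that passing from $S$ to $\bar S$ may make $j^r\psi^{-1}(\bar S)$ positive-dimensional (you only have transversality with $S$), so ``this only increases the point count'' needs an argument isolating the finitely many points you actually care about.

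The paper's proof sidesteps both difficulties. Rather than forcing a complete-intersection description, it writes the pullback $\overline W=(i^*)^{-1}(W)$ in its standard semialgebraic presentation $\overline W=\bigcup_j(Z_j\cap A_j)$ with $Z_j$ algebraic and $A_j$ open, and then appeals to a ready-made estimate (\cite[Proposition~14]{LerarioJEMS}) of the form $b_0\big((Q|_{S^m})^{-1}(Z_j)\big)\le Bd^m$, valid for \emph{any} algebraic set on $S^m$ cut out by equations of degree $O(d)$---no complete-intersection hypothesis, no perturbation to make the complex locus zero-dimensional. Transversality enters only at the end: since $A_j$ is open and $j^r\psi^{-1}(W)$ is finite, each of its points must lie in a zero-dimensional component of some $(Q|_{S^m})^{-1}(Z_j)$, and the number of such components is already controlled by the $b_0$ bound. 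Your outline could be repaired by replacing the direct B\'ezout count with precisely this $b_0$ estimate, but then it collapses into the paper's argument.
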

\begin{proof}
Let us make the identification $J^{r}(\R^{m+1}, \R^k)\simeq \R^{m+1}\times \R^N$, so that the restricted jet bundle $J^{r}(\R^{m+1}, \R^k)|_{S^m}$ corresponds to the semialgebraic subset $S^m\times \R^N$. Observe that the inclusion $S^m\hookrightarrow \R^{m+1}$ induces a semialgebraic map:
\be J^{r}(\R^{m+1}, \R^k)|_{S^m}\stackrel{i^*}{\longrightarrow}  J^r(S^m, \R^k),\ee
that, roughly speaking, forgets the normal derivatives.
Notice that while the map $j^{r}\psi=j^r(P|_{S^m})$ is a section of $J^r(S^m, \R^k)$,  $(j^rP)|_{S^m}$ is a section of $J^{r}(\R^{m+1}, \R^k)|_{S^m}$. These sections are related by the identity \be 
 i^*\circ (j^rP)|_{S^m}=j^{r}\psi.
 \ee
Thus, defining $ \overline{W}=(i^*)^{-1}(W)$, we have
\be j^r\psi^{-1}(W)=\left((j_rP)|_{S^m}\right)^{-1}(\overline W).\ee
Since $\overline{W}$ is a semialgebraic subset of $\R^{m+1}\times \R^N$ , it can be written as:
\be \overline{W}=\bigcup_{j=1}^\ell\left\{f_{j, 1}=0, \ldots, f_{j, \alpha_j}=0, g_{j, 1}>0,\ldots, g_{j, \beta_j}>0\right\},\ee
where the $f_{j,i}$s and the $g_{j, i}$s are polynomials of degree bounded by a constant $b>0.$ For every $j=1, \ldots, \ell$ we can write:
\be \left\{f_{j, 1}=0, \ldots, f_{j, \alpha_j}=0, g_{j, 1}>0,\ldots, g_{j, \beta_j}>0\right\}=Z_j\cap A_j,\ee
where $Z_j$ is algebraic (given by the equations) and $A_j$ is open (given by the inequalities).

Observe also that the map $(j^rP)|_{S^m}$ is the restriction to the sphere $S^m$ of a polynomial map
\be Q:\R^{m+1}\to \R^{m+1}\times \R^N\ee
whose components have degree smaller than $d$.
Therefore for every $j=1\ldots, \ell$ the set $((j^rP)|_{S^m})^{-1}(Z_j)=(Q|_{S^m})^{-1}(Z_j)$ is an algebraic set on the sphere defined by equations of degree less than $b \cdot d$ and, by \cite[Proposition 14]{LerarioJEMS} we have that:
\be\label{eq:ineqb} b_0(Q|_{S^m})^{-1}(Z_j))\leq B d^m\ee
for some constant $B>0$ depending on $b$ and $m$.
The set $(Q|_{S^m})^{-1}(Z_j)$ consists of several components, some of which are zero dimensional (points):
\be (Q|_{S^m})^{-1}(Z_j)=\underbrace{\{p_{j, 1}, \ldots, p_{j, \nu_j}\}}_{P_j}\cup \underbrace{X_{j,1}\cup\cdots \cup X_{j, \mu_j}}_{Y_j}.\ee
The inequality \eqref{eq:ineqb} says in particular that: \be\label{ineqbb}\#P_j\leq Bd^n.\ee
Observe now that if $j^r\psi\transv W$ then, because the codimension of $W$ is $m$, the set $j^{r}\psi^{-1}(W)=(Q|_{S^m})^{-1}(\overline{W})$ consists of finitely many points and therefore, since $(Q|_{S^m})^{-1}(A_j)$ is open, we must have:
\be j^{r}\psi^{-1}(W)\subset \bigcup_{j=1}^\ell P_j.\ee
(Otherwise $ j^{r}\psi^{-1}(W)$ would contain an open, nonempty set of a component of codimension smaller than $m$.)
Inequality \eqref{ineqbb} implies now that:
\be \#j^{r}\psi^{-1}(W)\leq \sum_{j=1}^\ell \#P_j\leq \ell b d^m\leq cd^m.\ee
\end{proof}

Using Theorem \ref{thm:strat} it is now possible to improve Theorem \ref{thm:bound} to the case of any codimension, replacing the cardinality with any Betti number. \begin{thm}\label{thm:bound2}
Let $P\in \R[x_0, \ldots, x_m]_{(d)}^k$ be a polynomial map and consider its restriction $\psi=P|_{S^m}$ to the unit sphere:
\be \psi:S^m\to \R^k.\ee
Let also $j^r\psi:S^m\to J^r(S^m, \R^k)$ be the associated jet map and $W\subset J^{r}(S^m, R^k) $ be a closed semialgebraic set (of arbitrary codimension). There exists a constant $c>0$ (which only depends on $W$, $m$ and $k$) such that, if $j^r\psi\transv W$,  then:
\be b_i\left(j^r\psi^{-1}(W)\right)\leq c \cdot d^m.\ee
\end{thm}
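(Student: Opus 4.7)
The plan is to combine the stratified Morse inequality of Theorem \ref{thm:strat} with the codimension-$m$ cardinality bound of Theorem \ref{thm:bound}. Starting from the hypothesis $j^r\psi \transv W$, I would first apply Theorem \ref{thm:strat} (together with Remark \ref{rem:strat}, since the section $j^r\psi$ is holonomic) to produce a closed semialgebraic, Whitney-stratified subset $\hat W \subset J^{r+1}(S^m, \R^{k+1})$ of codimension $\geq m$ such that: whenever a smooth $g \colon S^m \to \R$ satisfies $j^{r+1}(\psi, g) \transv \hat W$, the restriction $g|_Z$ is a stratified Morse function on $Z := j^r\psi^{-1}(W)$, and its critical set is $(j^{r+1}(\psi, g))^{-1}(\hat W)$.

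Next I would take $g$ to be the restriction to $S^m$ of a generic linear form $g_a(x) = \langle a, x\rangle$ on $\R^{m+1}$, and use a stratified parametric Sard argument (in the spirit of the classical construction of Morse functions on Whitney-stratified subsets via generic linear projections) to conclude that $j^{r+1}(\psi, g_a) \transv \hat W$ for generic $a$. The set $Z$ is compact since $W$ is closed and $S^m$ is compact, so Theorem \ref{thm:strat}(2) applies and yields
\[ b_i(Z) \leq N_W \cdot \#\bigl(j^{r+1}(\psi, g_a)\bigr)^{-1}(\hat W). \]

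It then remains to bound the right-hand side by $O(d^m)$, and this is exactly what Theorem \ref{thm:bound} delivers. Indeed, $j^{r+1}(\psi, g_a)$ is the restriction to $S^m$ of a polynomial map $\R^{m+1} \to \R^{m+1} \times \R^{N'}$ whose components have degree at most $d$ (partial derivatives of $P$ have degree $\leq d-1$, while $g_a$ is linear), and $\hat W$ is closed semialgebraic of codimension $\geq m$. The proof of Theorem \ref{thm:bound} carries over verbatim to the codimension-$\geq m$ setting: strata of codimension strictly greater than $m$ contribute empty preimages by transversality, and the codimension-$m$ strata are controlled by the Milnor-type bound $O(d^m)$ from \cite[Proposition 14]{LerarioJEMS}. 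Combining the two inequalities gives $b_i(Z) \leq c \cdot d^m$.

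The main obstacle is the parametric transversality argument of the second paragraph: one has to verify that perturbing only the linear form $g_a$ (a mere $(m+1)$-dimensional family) provides enough freedom to transversalize $j^{r+1}(\psi, g_a)$ to $\hat W$, which a priori lives in a much larger jet space. The point is that $\hat W$ constrains $g$ only through its $1$-jet, so one just needs $dg = a$ to move freely in $\R^{m+1}$, which is exactly what the family provides. If this analysis turns out to be delicate for some stratification of $\hat W$, a safe alternative is to enlarge the family to polynomials of a small fixed degree and invoke Thom's parametric transversality theorem directly.
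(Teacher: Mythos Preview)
Your proposal is correct and essentially matches the paper's proof: both reduce to the codimension-$m$ count via Theorem \ref{thm:strat} (with Remark \ref{rem:strat}) and then invoke Theorem \ref{thm:bound}. The only difference is that the paper takes $g$ to be a generic homogeneous polynomial of degree $d$, so that $\Psi=(\psi,g)\in\R[x_0,\ldots,x_m]_{(d)}^{k+1}$ fits Theorem \ref{thm:bound} verbatim and the transversality $j^{r+1}\Psi\transv\hat W$ is immediate from standard Thom transversality---this sidesteps the obstacle you flag at the end, and your proposed fallback (enlarging the family of $g$'s) is exactly what the paper does.
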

\begin{proof}
Let $J=J^r(S^m,\R^k)$ and let $\hat{W}$ be the (stratified according to a chosen stratification of $W$) subset of $J^{r+1}(S^m,\R^{k+1})$ coming from Theorem \ref{thm:strat} and Remark \ref{rem:strat}.
Let $g$ be a homogeneous polynomial of degree $d$ such that 
\be 
\Psi=(\psi,g)\in \R[x_0, \ldots, x_m]_{(d)}^{k+1}
\ee
satisfies the condition $j^{r+1}\Psi \transv \hat{W}$ (almost every polynomial $g$ has this property by standard arguments) and $(j^{r}\psi)^{-1}(W)$ is closed in $S^m$, hence compact. Then by Theorem \ref{thm:strat}, there is a constant $N_W$, such that
\be 
b_i\left(j^r\psi^{-1}(W)\right)\le N_W \#\{(j^{r+1}\Psi)^{-1}(\hat{W})\}
\ee
and by Theorem \ref{thm:bound}, the right hand side is bounded by $cd^m$.
\end{proof}
Given $P=(P_1, \ldots, P_k)$ with each $P_i$ a homogeneous polynomial of degree $d$ in $m+1$ variables, we denote by
\be \psi_d:S^m\to \R^k
\ee
its restriction to the unit sphere (the subscript keeps track of the dependence on $d$).
\begin{example}[Real algebraic sets] Let us take $W=S^m\times \{0\}\subset J^{0}(S^m, \R^k),$ then $j^0\psi^{-1}(W)$ is the zero set of $\psi_d:S^m\to \R^k$, i.e. the set of solutions of a system of polynomial equations of degree $d$. In this case the inequality \eqref{eq:detbound} follows from \cite{LerarioJEMS}.
\end{example}
\begin{example}[Critical points] If we pick $W=\{j^1f=0\}\subset J^1(S^m, \R),$ then $Z_d=j^1\psi_d^{-1}(W)$ is the set of critical points of $\psi_d:S^m\to \R$. In $2013$ Cartwright and Sturmfels \cite{CS} proved that 
\be 
\#Z_d\le 2(d-1)^m+\dots+(d-1)+1
\ee
(this bounds follows from complex algebraic geometry), and this estimate was recently proved to be sharp by Kozhasov \cite{khazOFRET}.
Of course one can also fix the index of a nondegenerate critical point (in the sense of Morse Theory); for example we can take $W=\{df=0, d^2f>0\}\subset J^2(S^m, \R),$ and $j^2\psi_d^{-1}(W)$ is the set of nondegenerate \emph{minima} of $\psi_d:S^m\to \R$ (similar estimates of the order $d^{m}$ holds for the fixed Morse index, but the problem of finding a sharp bound is very much open).
\end{example}

\begin{example}[Whitney cusps]When $W=\{\textrm{Whitney cusps}\}\subset J^3(S^2, \R^2),$ then $\psi_d^{3}f^{-1}(W)$ consists of the set of points where the polynomial map $\psi_d:S^2\to \R^2$ has a critical point which is a Whitney cusp. In this case \eqref{eq:detbound} controls the number of possible Whitney cusps (the bound is of the order $O(d^2)$).
\end{example}

\subsection{Semicontinuity of topology under holonomic approximation}\label{sec:holo}
Consider the following setting: $M$ and $J$ are smooth manifolds, $M$ is compact, and $W\subset J$ is a smooth cooriented submanifold. Given a smooth map $F\colon M\to J$ which is transversal to $W$, it follows from standard transversality arguments that there exists a small $\mathcal{C}^1$ neighborhood $U_1$ of $F$ such that for every map $\tilde {F}\in U_1$ the pairs $(M, F^{-1}(W))$ and $(M, \tilde{F}^{-1}(W))$ are isotopic (in particular $F^{-1}(W)$ and $\tilde{F}^{-1}(W)$ have the same Betti numbers, this is the so-called ``Thom's isotopy Lemma''). The question that we address is the behavior of the Betti numbers of $\tilde{F}^{-1}(W)$ under small $\mathcal{C}^0$ perturbations, i.e. how the Betti number can change under modifications of the map $F$ \emph{without} controlling its derivative.  
\begin{figure}\begin{center}
\includegraphics[scale=0.11]{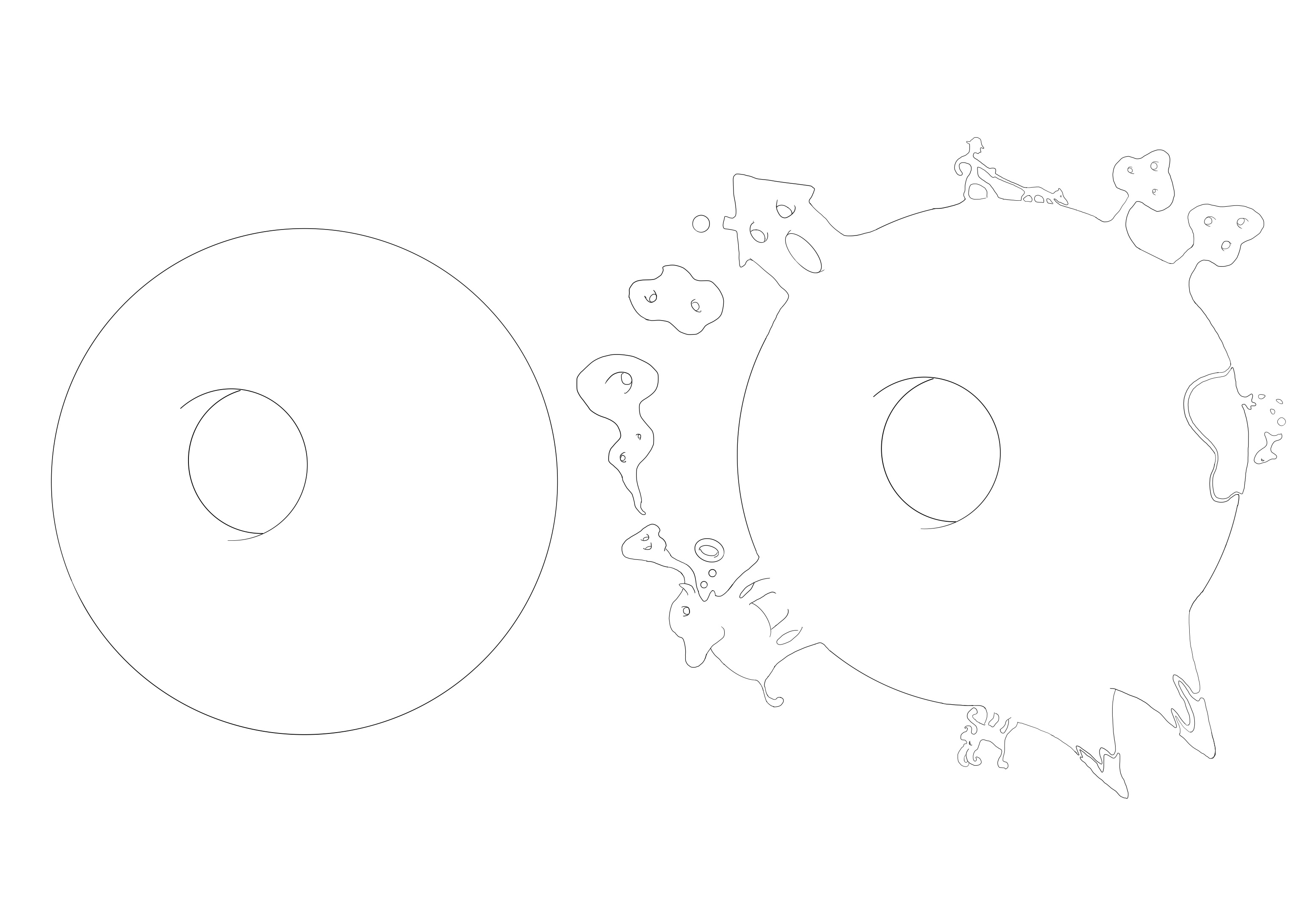}\caption{A small $\mathcal{C}^0$ perturbation of a regular equation can only increase the topology of its zero set.}\label{fig:perturb}
\end{center}\end{figure}
In this direction we prove the following result.
\begin{thm}\label{thm:semiconttop}
Let $M,J$ be smooth manifolds and let $W\subset J$ be a smooth cooriented closed submanifold.
Let $F\colon M\to J$ be a smooth map such that $F\transv W$. If a smooth map $\tilde{F}$ is strongly\footnote{Meaning: in Whitney strong topology. In particular if $C\subset M$ is closed and $U\subset J$ is open, then the set $\{f\in \mathcal{C}^0(M,J)\colon f(C)\subset U\}$ is open, see \cite{Hirsch}.} $\mathcal{C}^0-$close to $F$ such that $\tilde{F}\transv W$, then for all $i\in\N$ there is a group $K^i$ such that
\be \label{eq:semiconttop}
H^i\left(\tilde{F}^{-1}(W)\right)\cong H^i\left(F^{-1}(W)\right)\oplus K^i.
\ee
\end{thm}

\begin{proof}Call $A=F^{-1}(W)$ and $\tilde{A}=\tilde{F}^{-1}(W)$. Let
$E\subset M$ be a closed  tubular neighborhood (it exists because $A$ is closed), meaning that $E=\text{int}(E)\cup\de E$ is diffeomorphic to the unit ball of a metric vector bundle over $A$ (via a diffeomorphism that preserves $A$). Denote by $\pi\colon E\to A$ the retraction map. Since $\tilde{F}$ is $\mathcal{C}^0-$close to $F$ we can assume that there is a homotopy $F_t$ connecting $F=F_0$ and $\tilde{F}=F_1$ such that $F_t(\de E)\subset J\-
W$.
 Define analogously $\tilde{\pi}:\tilde{E}\to \tilde{A}$ in such a way that $\tilde{E}\subset \text{int}(E)$. It follows that there is an inclusion of pairs $u:(E,\de E)\to (E,E\-\tilde{E})$.
By construction, the function $F_t$ induces a well defined mapping of pairs $F_t\colon (E,\de E)\to (J,J\-W)$ for every $t\in[0,1]$, in particular there is a homotopy between $F_0$ and $F_1$ (meant as maps of pairs). Moreover with $t=1$, this map is the composition of $u$ and the map $F_1\colon (E,E\-\tilde{E})\to (J,J\-W)$.

The fact that $W$ is closed and cooriented guarantees the existence of a Thom class $\phi\in H^r(J,J\- W)$, where $r$ is the codimension of $W$. By transversality we have that also $A$ and $\tilde{A}$ are cooriented with Thom classes $F_0^*\phi=\phi_E\in H^r(E,\de E)\cong H^r(E,E\- A)$ and $F_1^*\phi=\phi_{\tilde{E}}\in H^r(\tilde{E},\de\tilde{E})\cong H^r(\tilde{E},\tilde{E}\- \tilde{A})$.
We now claim the commutativity of the diagram below. 
\be 
\begin{tikzcd}
                                                                                               & {H^{*+r}(J, J\backslash W)}\arrow[ldd, "F_1^*"'] \arrow[rdd, "F_1^*=F_0^*"]                         &                                                                  \\
                                                                                               &                                                      &                                                                  \\
{H^{*+r}(\tilde{E}, \de\tilde{E})} \arrow[r, "\eta^{-1}"]  & {H^{*+r}(E, E\backslash \tilde{E})} \arrow[r, "u^*"] & {H^{*+r}(E, \de E)}           \\
H^{*}(\tilde{A}) \arrow[u, "\tilde{\pi}^*(\cdot)\cup \phi_{\tilde{E}}"]                        &                                                      & H^*(A) \arrow[ll, "\pi^*"] \arrow[u, "\pi^*(\cdot)\cup \phi_E"']
\end{tikzcd}\ee
(where $\eta$ is the excision isomorphism).
For what regards the upper triangular diagram, the commutativity simply follows from the fact that all the maps $F_t$ are homotopic and that the excision homomorphism is the inverse of that induced by the inclusion $(E,E\-\tilde{E})\subset (\tilde{E},\de \tilde{E})$.
 To show that the lower rectangle commutes, observe that since $\tilde{\pi}$ is homotopic to the identity of $\tilde{E}$ we have that $\pi\circ\tilde{\pi}$ is homotopic to $\pi|_{\tilde{E}}$. Thus
the commutativity follows from the property of the cup product, saying that for all $\f\in H^*(A)$ we have
\be\begin{aligned}
u^*\circ \eta^{-1}\circ\left(\tilde{\pi}^*\left(\pi|_{\tilde{A}}\right)^*\f\right)\cup \phi_{\tilde{E}}
&=
\left(u^*\circ\eta^{-1}\circ\left(\pi|_{\tilde{E}}\right)^*\f\right)\cup\left( u^*\circ \eta^{-1} \circ F_1^*\phi\right)\\
&=
\pi^*\f\cup \phi_E,
\end{aligned}\ee
where in the last equality we used the identity $u^*\circ \eta^{-1}\circ F_1^*=F_0^*$ implied by the commutativity of the upper triangle.
Since the vertical maps are (Thom) isomorphisms, there exists a homomorphism $U\colon H^*(\tilde{A})\to H^*(A)$ such that $U\circ \pi^*=$id.
\end{proof}
\begin{remark}\label{rem:semiconttop}
The above proof also provides a way to determine how small should the perturbation be. In fact we showed that if $F_t\colon M\to J$ is a homotopy such that $F_1\transv W$ and $F_t(\de E)\subset J\- W$ for all $t\in [0,1]$, where $E$ is a closed tubular neighborhood of $F^{-1}(W)$, then the map $\tilde{F}=F_1$ satisfies \eqref{eq:semiconttop}. Notice that to have such property it is enough that $\tilde{F}\transv W$ and $\tilde{F}|_{\de E}$ is $\mathcal{C}^0-$close to $F|_{\de E}$.  
 This implies that the size of the $\mathcal{C}^0$ neighborhood of $F$ in which the identity \eqref{eq:semiconttop} holds depends only on the restriction of $F$ to a codimension $1$ submanifold.
\end{remark}
\begin{cor}\label{cor:topositive}Let $M$ be a compact manifold of dimension $m$.
Let $W\subset J^r(M,\R^k)$ be a Whitney stratified submanifold of codimension $1\le l\le m$ being transverse to the fibers of the canonical projection $\pi\colon J^r(M,\R^k)\to M$. Then for any number $n \in \N$ there exists a smooth function $\psi\in \mathcal{C}^{\infty}(M,\R^k)$ such that $j^r\psi\transv W$ and 
\be 
b_i\left((j^r\psi)^{-1}(W)\right)\ge n, \quad \forall i=0,\dots, m-l.
\ee 
\end{cor}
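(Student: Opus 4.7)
The plan is to realize a prescribed topology via a non-holonomic section of the jet bundle, replace this section by a genuine jet through the Holonomic Approximation Theorem, and then invoke Theorem \ref{thm:semiconttop} to transfer the lower bound on Betti numbers through the resulting $\mathcal{C}^0$-perturbation.

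First I would construct a (generally non-holonomic) section $\sigma\colon M\to J^r(M,\R^k)$ with $\sigma\transv W$ and $b_i(\sigma^{-1}(W))\ge n$ for $i=0,\dots,m-l$. Fix a chart $U\cong \mathbb{D}^m\subset M$ with a trivialization $J^r(M,\R^k)|_U\cong U\times \R^N$. Fiber-transversality means that near any fiber point of $W$, the set $W$ is locally cut out by $l$ smooth equations $f_1,\dots,f_l$ whose differentials in the fiber direction have rank $l$. By the implicit function theorem, for any smooth $\f\colon U\to \R^l$ having $0$ as a regular value, I can lift $\f$ to a section $s\colon U\to U\times \R^N$ with $(f_1,\dots,f_l)\circ s=\f$; then automatically $s\transv W$ and $s^{-1}(W)=\f^{-1}(0)$. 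I choose $\f$ so that $\f^{-1}(0)$ is $n$ disjoint copies of the torus $T^{m-l}=(S^1)^{m-l}$ (or of the $0$-sphere if $m=l$): this is realizable inside $\mathbb{D}^m$ with trivial normal bundle and yields $b_i\ge n\binom{m-l}{i}\ge n$ in the required range. Finally I extend $s$ to a global section $\sigma$ of $J^r(M,\R^k)$ using a cut-off, perturbing if needed so that $\sigma\transv W$ globally. Any additional components of $\sigma^{-1}(W)$ created by the extension are disjoint from the chosen piece and can only increase the relevant Betti numbers.

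Next, the set $K:=\sigma^{-1}(W)$ is a compact submanifold of $M$ of codimension $l\ge 1$, hence a subpolyhedron of positive codimension. The Holonomic Approximation Theorem \cite[p.~22]{eliash} applied to $K$ produces, for every $\e>0$, a diffeomorphism $h\colon M\to M$ which is $\e$-close to the identity in $\mathcal{C}^0$ together with a smooth $\psi\in \mathcal{C}^\infty(M,\R^k)$ (obtained by extending the local approximant by a smooth cut-off) such that $j^r\psi$ is $\e$-close in $\mathcal{C}^0$ to $\sigma\circ h^{-1}$ on a tubular neighbourhood of $h(K)$, while on the complement $j^r\psi$ remains bounded away from $W$. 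The latter is possible because $\sigma$ itself is bounded away from $W$ outside a tubular neighbourhood of $K$ (by transversality and compactness of $M$), so choosing $\e$ small enough and the cut-off appropriately preserves this separation for $j^r\psi$. A final generic small perturbation of $\psi$ ensures $j^r\psi\transv W$.

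Finally, apply Theorem \ref{thm:semiconttop} (stratum by stratum of the Whitney stratification of $W$, as permitted by the stratified version alluded to in the footnote of Theorem \ref{thm:nonho}) with $F:=\sigma\circ h^{-1}$ and $\tilde F:=j^r\psi$. Both maps are transverse to $W$ and $\tilde F$ is $\mathcal{C}^0$-close to $F$ on $M$, hence $b_i\bigl((j^r\psi)^{-1}(W)\bigr)\ge b_i\bigl((\sigma\circ h^{-1})^{-1}(W)\bigr)$. Since $h$ is a diffeomorphism, $(\sigma\circ h^{-1})^{-1}(W)=h(\sigma^{-1}(W))$ is diffeomorphic to $\sigma^{-1}(W)$, so $b_i((j^r\psi)^{-1}(W))\ge b_i(\sigma^{-1}(W))\ge n$ for all $0\le i\le m-l$. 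The delicate point I expect is the second step: the Holonomic Approximation Theorem only gives a local $\mathcal{C}^0$-approximation near $h(K)$, whereas Theorem \ref{thm:semiconttop} is a global statement, so achieving the $\mathcal{C}^0$-closeness on all of $M$ requires exploiting the ``isolation'' of $K$ and carefully matching the scales of the tubular neighbourhood, the HAT parameter $\e$, and the cut-off used to extend the approximant.
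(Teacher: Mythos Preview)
Your proposal is correct and follows essentially the same three-step strategy as the paper: construct a non-holonomic section hitting $W$ in a prescribed submanifold, holonomically approximate via \cite[p.~22]{eliash}, then invoke Theorem~\ref{thm:semiconttop}. The only notable difference is your choice of model: you use $n$ copies of the torus $T^{m-l}$ (whose Betti numbers $\binom{m-l}{i}$ cover the whole range at once), whereas the paper takes $n$ copies of each product $S^i\times S^{m-l-i}$ for $i=0,\dots,m-l$ separately; your choice is a bit more economical but otherwise plays the same role. You are also more explicit than the paper about the ``delicate point'' that the Holonomic Approximation Theorem only yields $\mathcal{C}^0$-closeness near $h(K)$, and you correctly identify that what Theorem~\ref{thm:semiconttop} really uses is containment of $\tilde F^{-1}(W)$ in a tube around $F^{-1}(W)$ together with a homotopy staying away from $W$ outside that tube --- which your separation argument supplies.
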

\begin{proof}
Let $B\subset J^r(M,\R^k)$ be a small neighbourhood of a regular point $j^r_pf$ of $W$ so that $(B,B\cap W)\cong (\R^{N+l},\R^N\times \{0\})$. Moreover we can assume that there is a neighbourhood $U\cong \R^m$ of $p\in M$ and a commutative diagram of smooth maps
\be\label{eq:diagramabove}
\begin{tikzcd}
                                                         & \R^m\times \R^k\times \{0\} \arrow[rr, hook] &                                                        & \R^m\times\R^k\times\R^l \arrow[d] \\
B\cap W \arrow[rr, hook] \arrow[ru, "\cong" description] &                                              & B \arrow[d, "\pi"'] \arrow[ru, "\cong" description] & \R^m                               \\
                                                         &                                              & U \arrow[ru, "\cong" description]                      &                                   
\end{tikzcd}
\ee
This follows from the fact that $\pi|_{W}$ is a submersion, because of the transversality assumption. 
For any $0\le i\le m-l$ consider the smooth map
\be 
\f_i\colon \R^m\to \R^l, \quad 
u\mapsto \left(\sum_{\ell=1}^{i+1} (u_\ell)^2-1,\sum_{\ell=i+2}^{m} (u_\ell)^2-1 ,u_{m-l+3},\dots,u_m\right)
\ee 
Clearly $0$ is a regular value for $\f_i$, with preimage\footnote{Except for the case  $l=1$. Here one should adjust the definition of $\f_i$ in order to have $b_i(\f_i^{-1}(0))>0$.} $\f_i^{-1}(0)\cong S^i\times S^{m-l-i}$ and it is contained in the unit ball of radius $2$. Let $C\subset\R^m$ be a set of $n(m-l+1)$ points such that $|c-c'|\ge 5$ for all pair of distinct elements $c,c'\in C$. Now choose a partition $C=C_0\sqcup C_1\sqcup\dots C_{m-l}$ in sets of cardinality $n$ and define a smooth map $\f\colon \R^m\to \R^l$ such that $\f(x)=\f_i(x-c)$ whenever $\text{dist}(x,C_i)\le 2$. We may also assume that $0$ is a regular value for $\f$. Notice that $\f^{-1}(0)$ has a connected component 
\be 
S\cong \{1,\dots, n\}\times\left( S^0\times S^{m-l}\sqcup S^1\times S^{m-l-1}\sqcup \dots S^{m-l}\times S^{0}\right).
\ee
Construct a smooth (non necessarily holonomic) section $F\colon U\to J^r(U,\R^k)$ such that $F\transv W$ and such that $F=(u,0,\f)$ on a neighbourhood of $S$, so that $F^{-1}(W)$ still contains $S$ as a connected component, hence $b_i(F^{-1}(W))\ge n$ for all $i=0,\dots, m-l$.

Let $E\subset U$ be a closed tubular neighborhood of $F^{-1}(W)$. 
To conclude we use the holonomic approximation theorem \cite[p. 22]{eliash}, applied to $F\colon U\to J^r(U,\R^k)\cong U\times \R^{k+l}$ near the codimension $1$ submanifold $\de E\subset U$. Such theorem ensures that for any $\e>0$ there exists a diffeomorphism $h\colon U\to U$, an open neighborhood $O_{\de E}\subset U$ of $\de E$ and a smooth function $\psi \colon U\to \R^k$ such that
\be 
\text{dist}_{\mathcal{C}^0}\left((j^rf)|_{h(O_{\de E})},F|_{h(O_{\de E})}\right)<\e,\quad  \text{and }\quad  \text{dist}_{\mathcal{C}^0}\left(h,\mathrm{id}\right)<\e.
\ee
Moreover, we can assume that $j^r\psi\transv W$, by Thom transversality Theorem (see \cite{Hirsch} or \cite{eliash}). 
In particular, it follows that
\be 
\text{dist}_{\mathcal{C}^0}\left((j^rf)\circ h|_{\de E},F|_{\de E}\right)<\left(1+C(F)\right)\cdot \e,
\ee
where $C(F)$ is the lipshitz constant of $F|_{U}$, which can be assumed to be finite (if not, replace $U\cong \R^m$ with an open ball that still contains $F^{-1}(W)$).
Consider the smooth manifold $J=J^r(U,\R^k)$. By the diagram \eqref{eq:diagramabove} it follows that $W\subset J$ is a closed and cooriented smooth submanifold, so that by Theorem \ref{thm:semiconttop} and Remark \ref{rem:semiconttop} we know that if $\e>0$ is small enough, then the map $\tilde{F}=(j^rf)\circ h$ satisfies the identity \eqref{eq:semiconttop}. Therefore for each $i=0,\dots,m-l$, we have
\be 
\begin{aligned}
b_i\left(\left(j^rf\right)^{-1}(W)\right)
&=
b_i\left(\left((j^rf)\circ h\right)^{-1}(W)\right)\\
 &\ge 
b_i\left(\left(F\circ h\right)^{-1}(W)\right) \\
&=
b_i\left(F^{-1}(W)\right)\\
&\ge 
n.
\end{aligned}
\ee
\end{proof}
\section{Random Algebraic Geometry}\label{sec:RAG}
\subsection{Kostlan maps}In this section we give the definition of a random Kostlan polynomial map $P:\R^{m+1}\to \R^{k}$, which is a Gaussian Random Field (GRF) that generalizes the notion of Kostlan polynomial.
\begin{defi}[Kostlan polynomial maps]\label{def:Kostlan}
Let $d,m,k\in \N$. We define the degree $d$ homogeneous Kostlan random map as the measure on $\R[x]_{(d)}^k=\R[x_0,\dots,x_{m}]_{(d)}^k$ induced by the gaussian random polynomial:
\be 
P_d^{m,k}(x)=\sum_{\a\in \N^{m+1},\ |\a|=d} \xi_\a x^\a,
\ee
where $x^\a=x_0^{\a_0}\dots x_m^{\a_m}$ and $\{\xi_\a\}$ is a family of independent gaussian random vectors in $\R^k$ with covariance matrix
\be 
K_{\xi_\a}={d\choose\a}\mathbbm{1}_k=\left(\frac{d!}{\a_0!\dots \a_m!}\right)\mathbbm{1}_k.
\ee
We will call $P_d^{m,k}$ the \emph{Kostlan polynomial} of type $(d,m,k)$ (we will simply write $P_d=P_d^{m,k}$ when the dimensions are understood).
\end{defi}
(In other words, a Kostlan polynomial map $P_d^{m, k}$ is given by a list of $k$ independent Kostlan polynomials of degree $d$ in $m+1$ homogeneous variables.)

There is a non-homogeneous version of the Kostlan polynomial, which we denote as
\be \label{eq:Kd}
p_d(u)=P_d(1,u)=\sum_{\beta\in \N^{m},\ |\beta|\le d} \xi_\beta u^\beta  \in \g \infty{\R^m}k,
\ee
where $u=(u_1,\dots, u_m)\in \R^m$ and $\xi_\beta\sim N\left(0,{d\choose \beta}\mathbbm{1}_k\right)$ are independent. Here we use the notation of \cite{DTGRF1}, where $\g \infty{\R^m}k$ denotes the space of gaussian random field on $\R^m$ with values in $\R^k$ which are $\mathcal{C}^{\infty}$.
Next Proposition collects some well known facts on the Kostlan measure.
\begin{prop}\label{covkostlanprop}Let $P_d$ be the Kostlan polynomial of type $(d,m,k)$ and $p_d$ be its dehomogenized version, as defined in \eqref{eq:Kd}.
\begin{enumerate}
\item
 For every $x,y\in \R^{m+1}$:
\be 
K_{P_d}(x,y)=\left(x^Ty\right)^d\mathbbm{1}_k.
\ee
Moreover, given $R\in O(m+1)$ and $S\in O(k)$ and defined the polynomial $\tilde{P}_d(x)=SP_d(Rx)$, then $P_d$ and $\tilde{P}_d$ are equivalent\footnote{Two random fields are said to be \emph{equivalent} if they induce the same probability measure on $\Cr \infty{\R^m}k$.}.

\item  For every $u, v\in \R^n$
\be K_{p_d}(u,v)=(1+u^Tv)^d\mathbbm{1}_k.\ee
Moreover, if $R\in O(m)$ and $S\in O(k)$ and defined the polynomial $\tilde{p}_d(x)=Sp_d(Rx)$, then $p_d$ and $\tilde{p}_d$ are equivalent.
\end{enumerate}
\end{prop}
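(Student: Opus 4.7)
The plan is to compute the covariance kernels directly from the definitions and then invoke the fact that a centered Gaussian random field is determined up to equivalence by its covariance kernel, so that the invariance statements reduce to covariance identities.

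For part (1), I would write $P_d(x)=\sum_\alpha \xi_\alpha x^\alpha$ and use the independence of the coefficients $\{\xi_\alpha\}$ together with $\E[\xi_\alpha \xi_\alpha^T]=\binom{d}{\alpha}\mathbbm{1}_k$ to get
\be
K_{P_d}(x,y)=\E[P_d(x)P_d(y)^T]=\sum_{|\alpha|=d}\binom{d}{\alpha}x^\alpha y^\alpha \mathbbm{1}_k.
\ee
The multinomial theorem applied to $(x_0y_0+\cdots+x_my_m)^d=(x^Ty)^d$ immediately yields the stated formula. For the orthogonal invariance, I would compute $K_{\tilde P_d}(x,y)=S\,K_{P_d}(Rx,Ry)\,S^T$; using $R\in O(m+1)$ and $S\in O(k)$ this collapses to $(x^Ty)^d\mathbbm{1}_k$, so $\tilde P_d$ and $P_d$ have the same covariance and, being centered Gaussians, are equivalent.

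For part (2), the covariance of $p_d(u)=P_d(1,u)$ is just the restriction of $K_{P_d}$ to vectors with first coordinate $1$, so
\be
K_{p_d}(u,v)=\bigl((1,u)^T(1,v)\bigr)^d\mathbbm{1}_k=(1+u^Tv)^d\mathbbm{1}_k.
\ee
The invariance under $R\in O(m)$, $S\in O(k)$ then follows exactly as in (1): the scalar $1$ is untouched and $u^TR^TRv=u^Tv$. Centeredness and equality of covariances give equivalence of the two Gaussian fields.

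There is no real obstacle here; the only mild subtlety is to make sure that the equivalence statement (equality as probability measures on the infinite-dimensional space $\mathcal{C}^\infty(\R^m,\R^k)$) is justified, but this is a standard fact about centered Gaussian random fields on a separable space, whose law is characterized by their finite-dimensional marginals and hence by the covariance kernel. In particular, since both sides are polynomial-valued random fields on a finite-dimensional vector space, the whole argument reduces to the finite-dimensional Gaussian setting on $\R[x]_{(d)}^k$, where the claim is immediate.
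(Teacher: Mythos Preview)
Your proof is correct and follows essentially the same approach as the paper: a direct computation of the covariance via the multinomial theorem, followed by the observation that orthogonal invariance of the covariance kernel implies equivalence of the centered Gaussian fields. Your version is in fact slightly more explicit about the invariance step and about why equality of covariances suffices, but there is no substantive difference.
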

\begin{proof}The proof of this proposition simply follows by computing explicitly the covariance functions and observing that they are invariant under orthogonal change of coordinates in the target and the source. For example, in the case of $P_{d}$ we have:
\be
\begin{aligned}
K_{P_d}(x,y)&= \E\{P_d(x)P_d(y)^T\}= \\
&= \sum_{|\a|,|\a '|=d}\E\left\{\xi_\a\xi_{\a '}^T\right\} x^\a y^{\a'}=\\
&=\sum_{|\a|=d}{d\choose\a} (x_0 y_0)^{\a_0}\dots(x_m y_m)^{\a_m}\mathbbm{1}_k=\\
&= (x_0y_0+\dots +x_my_m)^d\mathbbm{1}_k,
\end{aligned}
\ee
from which the orthogonal invariance is clear. The case of $p_d$ follows from the identity:
\be 
K_{p_d}(u,v)=K_{P_d}\left((1,u),(1,v)\right).
\ee .
\end{proof}
\subsection{Properties of the rescaled Kostlan}

The main feature here is the fact that the local model of a Kostlan polynomial has a rescaling limit. The orthogonal invariance is used to prove that the limit does not depend on the point where we center the local model, hence it is enough to work around the point $(1, 0, \ldots, 0)\in S^m$. These considerations lead to introduce the Gaussian Random Field $X_d:\R^m\to\R^{k}$ (we call it the \emph{rescaled Kostlan}) defined by:
\be \label{eq:rescaledKostlan}X_d(u)=P_d^{m,k}\left(1, \frac{u_1}{\sqrt{d}}, \ldots, \frac{u_m}{\sqrt{d}}\right).\ee
Next result gives a description of the properties of the rescaled Kostlan polynomial, in particular its convergence in law as a random element of the space of smooth functions, space which, from now, on we will always assume to be endowed with the weak Whitney's topology as in \cite{DTGRF1}.

\begin{thm}[Properties of the rescaled Kostlan]\label{thm:Kostlan}
Let $X_d:\R^m\to \R^{k}$ be the Gaussian random field defined in \eqref{eq:rescaledKostlan}. 
\begin{enumerate}[1.]
\item \emph{(The limit)} Given a family of independent gaussian random vectors $\xi_\beta \sim N\left(0,\frac{1}{\beta !}\mathbbm{1}_k\right)$, the series 
\be 
X_\infty(u)=\sum_{\beta\in \N^{m}} \xi_\beta u^\beta  ,
\ee
is almost surely convergent in $\Cr \infty {\R^m}k$ to the Gaussian Random Field\footnote{$X_\infty$ is indeed a random analytic function, commonly known as the Bargmann-Fock ensemble.} $X_\infty\in \g \infty{\R^m}k$.
\item \emph{(Convergence)} $X_d\nrw X_\infty$ in $\g \infty {\R^m}k$, that is:
\be 
\lim_{d\to +\infty}\E\{F(X_d)\}=\E\{F(X_\infty)\}
\ee 
for any bounded and continuous function $F\colon \Cr \infty{\R^m}k\to \R$. Equivalently, we have
\be 
\label{eq:inequKostlan}\P\{X_\infty \in \emph{int}(A)\}\le \liminf_{d\to +\infty}\P\{X_d\in A\}\le \limsup_{d\to +\infty}\P\{X_d\in A\}\le \P\{X_\infty\in \overline{A}\}
\ee
for any Borel subset $A\subset \Cr \infty {\R^m}k$.
\item \emph{(Nondegeneracy of the limit)} The support of $X_\infty$ is the whole $\Cr \infty {\R^m}k$. In other words, for any non empty open set $U\subset \Cr\infty{\R^m}k$ we have that $\P\{X_\infty \in U\}>0$.
\item \emph{(Probabilistic Transversality)} For $d\ge r$ and $d=\infty$, we have $\mathrm{supp}(j^r_pX_d)=J_p^r(\R^m,\R^k)$ for every $p\in \R^m$ and consequently for every submanifold $W\subset J^r(\R^m,\R^k)$, we have 
\be 
\P\{j^rX_d\transv W\}=1.
\ee
\item \emph{(Existence of limit probability)} Let $V\subset J^{r}(\R^m, \R^k)$ be an open set whose boundary is a (possibly stratified) submanifold\footnote{For example $V$ could be a semialgebraic set}. Then
\be 
\lim_{d\to +\infty}\P\{j^r_p X_d\in V,\ \forall p\in \R^m\}= \P\{j^r_pX_\infty(\R^m) \in V,\ \forall p\in \R^m\}.
\ee
In other words, we have equality in \eqref{eq:inequKostlan} for sets of the form $U=\{f\colon j^rf\in V\}$.
\item \emph{(Kac-Rice densities)} Let $W\subset J^r(\R^m,\R^k)$ be a semialgebraic subset of codimension $m$, such that\footnote{In this paper the symbol $\transv$ stands for ``it is transverse to''.} $W\transv J_p^r(\R^m,\R^k)$ for all $p\in M$ (i.e. $W$ is transverse to fibers of the projection of the jet space). Then for all $d\ge r$ and for $d=+\infty$ there exists a locally bounded function $\rho_d^W\in L^\infty_{loc} (\R^m)$ such that\footnote{A formula for $\rho_d^W$ is presented in \cite{DTGRF1}, as a generalization of the classical Kac-Rice formula.} 
\be 
\E\#\{u\in A\colon j^r_uX_d\in W\}=\int_A\rho^W_d,
\ee
for any Borel subset $A\subset \R^m$. Moreover $\rho^W_d  \to \rho^W_\infty$ in $L^\infty_{loc}$.
\item \label{thm:Ebetti}\emph{(Limit of Betti numbers)} Let $W\subset J^r(\R^m,\R^k)$ be any closed semialgebraic subset transverse to fibers. Then:
\be \label{eq:localEbetti}
\lim_{d\to +\infty}\E\left\{b_i\left((j^{r}X_d)^{-1}(W)\cap\mathbb{D}^m\right)\right\}=\E\left\{b_i\left((j^{r}X_\infty)^{-1}(W)\cap \mathbb{D}^m\right)\right\},
\ee
where $b_i(Z)=\dim H_i(Z,\R)$. Moreover, if the codimension of $W$ is $l\ge 1$, then the r.h.s. in equation \eqref{eq:localEbetti} is strictly positive for all $i=0,\dots, m-l$.
\end{enumerate}
\end{thm}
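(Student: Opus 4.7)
The plan is to split the seven items into three groups: $(1)$--$(3)$ are standard facts about the Bargmann--Fock ensemble, $(4)$--$(6)$ are routine consequences of the framework developed in \cite{DTGRF1}, and $(7)$ is the main new content of the theorem.

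For $(1)$, a Borel--Cantelli estimate on the Gaussians $\xi_\beta\sim N(0,\tfrac{1}{\beta!}\mathbbm{1}_k)$ shows that almost surely $|\xi_\beta|$ decays faster than any polynomial in $|\beta|$, so the series defining $X_\infty$ and all of its termwise derivatives converge uniformly on compact subsets of $\R^m$, placing $X_\infty$ in $\mathcal{C}^\infty$ almost surely. For $(2)$, Proposition \ref{covkostlanprop} and a direct computation give $K_{X_d}(u,v)=(1+\tfrac{u\cdot v}{d})^d\mathbbm{1}_k\to e^{u\cdot v}\mathbbm{1}_k=K_{X_\infty}(u,v)$ in $\mathcal{C}^\infty_{\mathrm{loc}}(\R^m\times\R^m)$; the convergence criterion for smooth Gaussian fields in \cite{DTGRF1} then upgrades this to weak convergence $X_d\Rightarrow X_\infty$ in $\mathcal{G}^\infty(\R^m,\R^k)$. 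For $(3)$, the reproducing kernel Hilbert space of $X_\infty$ is the classical Bargmann--Fock space of entire functions; it contains all polynomials and hence is dense in $\mathcal{C}^\infty(\R^m,\R^k)$, and since the support of a centered Gaussian measure equals the closure of its RKHS we obtain $\mathrm{supp}(X_\infty)=\mathcal{C}^\infty(\R^m,\R^k)$.

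For $(4)$, the orthogonal invariance of Proposition \ref{covkostlanprop} reduces the full-support claim for $j^r_pX_d$ to the case $p=0$, where the jet coefficients through order $r$ are independent nondegenerate Gaussians as soon as $d\ge r$; the case $d=\infty$ follows from $(3)$. Bulinskaya's lemma, in the form given in \cite{DTGRF1}, then yields $\mathbb{P}\{j^rX_d\transv W\}=1$. For $(5)$, it suffices to show that the boundary of the event $U=\{f:j^rf(\R^m)\subset V\}$ is negligible under $X_\infty$; that boundary is contained in the event $\{j^rX_\infty\text{ meets }\partial V\}$, and applying $(4)$ to a Whitney stratification of $\partial V$ shows this event has probability zero by a dimension count. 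For $(6)$, the Kac--Rice formula of \cite{DTGRF1} expresses $\rho^W_d(u)$ as an integral involving finitely many derivatives of $K_{X_d}$ at $u$; local boundedness and $L^\infty_{\mathrm{loc}}$-convergence $\rho^W_d\to\rho^W_\infty$ are then immediate from the $\mathcal{C}^\infty_{\mathrm{loc}}$-convergence in $(2)$.

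The heart of the theorem is $(7)$, which I would attack in three steps. First, convergence in law: on the full-measure event of $(4)$ the limit $j^rX_\infty$ is transverse to $W$, and by applying $(4)$ once more to a collar of $\partial\mathbb{D}^m$ the preimage avoids $\partial\mathbb{D}^m$, so Thom's Isotopy Lemma gives continuity of $f\mapsto b_i((j^rf)^{-1}(W)\cap\mathbb{D}^m)$ at $X_\infty$ in the $\mathcal{C}^{r+1}$-topology; together with $(2)$ and the continuous mapping theorem, this yields weak convergence of the Betti numbers. Second, uniform integrability: fix a deterministic Morse datum $g$ and invoke Theorem \ref{thm:strat} with Remark \ref{rem:strat} to bound $b_i$ by the number of transverse intersections of $j^{r+1}(X_d,g)$ with the auxiliary semialgebraic set $\hat W$ of codimension $m$; a two-point Kac--Rice estimate, whose integrand is controlled uniformly by the $\mathcal{C}^\infty_{\mathrm{loc}}$-convergence of kernels, then bounds $\sup_d \mathbb{E}\{b_i(\cdots)^2\}$ and upgrades weak convergence to convergence in expectation. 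Third, strict positivity: Corollary \ref{cor:topositive}, applied on a compactification containing $\mathbb{D}^m$ with the supporting configuration localized in the interior, produces $f_0\in\mathcal{C}^\infty(\R^m,\R^k)$ with $j^rf_0\transv W$ and $b_i((j^rf_0)^{-1}(W)\cap\mathbb{D}^m)\ge 1$ for each $0\le i\le m-l$; Theorem \ref{thm:semiconttop} yields a $\mathcal{C}^0$-neighborhood of $j^rf_0$ on which this Betti number stays $\ge 1$, and by $(3)$ the field $X_\infty$ falls into this neighborhood with positive probability.

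The main obstacle is the uniform-integrability step of $(7)$: classical Kac--Rice controls only the first moment of critical-point counts, whereas the second moment is what guarantees convergence of expectations. The required bound needs uniform nondegeneracy of the joint covariance of $j^{r+1}(X_d,g)$ at pairs of distinct points; this is the kind of local estimate afforded by the limit Bargmann--Fock kernel and, by the $\mathcal{C}^\infty_{\mathrm{loc}}$-convergence established in $(2)$, is inherited by $X_d$ uniformly in $d\ge d_0$.
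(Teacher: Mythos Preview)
Your treatment of items (1)--(6) is essentially the same as the paper's; the only cosmetic difference is that for (1) the paper invokes convergence of covariances plus the It\^o--Nisio theorem rather than a direct Borel--Cantelli estimate, and for (4) it computes the span of jets of monomials directly rather than appealing to orthogonal invariance.

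The substantive divergence is in (7). Two points:

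\emph{First}, the paper does not use a second-moment/two-point Kac--Rice bound for uniform integrability. Instead it couples $b_d=b_i(S_d)$ with the dominating count $N_d=\#\{(j^{r+1}Y_d)^{-1}(\hat W')\}$, where $Y_d=(X_d,x_d)$ is the rescaled Kostlan of type $(m,k+1)$ (so the extra ``Morse function'' is itself a random Kostlan coordinate, not a fixed deterministic $g$). Then $Y_d\Rightarrow Y_\infty$, so by Skorokhod one may assume $(b_d,N_d)\to(b_\infty,N_\infty)$ almost surely, and by point (6) one already has $\E N_d\to\E N_\infty$. Fatou applied to $N_d+N_\infty-|b_d-b_\infty|\ge 0$ gives $\limsup_d\E|b_d-b_\infty|\le 0$. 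This avoids your ``main obstacle'' entirely: no near-diagonal control of the two-point function is needed, only the first-moment convergence already established in (6). Your route would work if you can prove the uniform second-moment bound, but that is genuinely extra work (short-range nondegeneracy of the joint jet distribution), and the paper's Fatou trick is both shorter and more robust. Note also that taking the auxiliary Morse function random rather than deterministic is not merely aesthetic: it makes the transversality hypothesis $j^{r+1}Y_d\transv\hat W'$ automatic by (4), whereas with a fixed $g$ you would have to argue separately that one $g$ works for all $d$.

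\emph{Second}, two small imprecisions. In your Step~1 you write ``the preimage avoids $\partial\mathbb{D}^m$''; this is false when $\mathrm{codim}(W)<m$ (a generic hypersurface certainly meets the boundary sphere). What you need, and what does hold almost surely, is that $(j^rX_\infty)^{-1}(W)$ is \emph{transverse} to $\partial\mathbb{D}^m$, which together with $j^rX_\infty\transv W$ makes the Betti number locally constant. For positivity, your invocation of Theorem~\ref{thm:semiconttop} is overkill: once Corollary~\ref{cor:topositive} produces $f_0$ with $j^rf_0\transv W$ and $b_i\ge 1$, ordinary Thom isotopy gives a $\mathcal{C}^{r+1}$-open neighborhood on which $b_i\ge 1$, and (3) then gives positive probability directly. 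The paper's Lemma~\ref{lem:positibetti} does exactly this.
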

\begin{proof}The proof uses a combination of results from \cite{DTGRF1}.
\begin{enumerate}[$(1)$]
\item Let $S_d=\sum_{|\beta|\le d}\xi_\beta u^\beta\in \g \infty Mk$. The covariance function of $S_d$ converges in Whitney's weak topology: \be K_{S_d}(u,v)=\sum_{|\beta|\le d}\frac{u^\beta v^\beta}{\beta !}\mathbbm{1}_k\xrightarrow{\mathcal{C}^\infty} \exp(u^Tv)\mathbbm{1}_k.\ee 
It follows by \cite[Theorem 3]{DTGRF1} that $S_d$ converges in $\g \infty Mk$, moreover since all the terms in the series are independent we can conclude with the Ito-Nisio \footnote{It may not be trivial to apply the standard Ito-Nisio theorem, which actually regards convergence of series in a Banach space. See Theorem $36$ of \cite{DTGRF1} for a statement that is directly applicabile to our situation} Theorem \cite{ItoNisio} that indeed the convergence holds almost surely.
\item By  \cite[Theorem 3]{DTGRF1} it follows from convergence of the covariance functions:
\be 
K_{X_d}(u,v)=\left(1+\frac{u^Tv}{d}\right)^d\mathbbm{1}_k \quad \xrightarrow{\mathcal{C^\infty}}\quad  K_{X_\infty}(u,v)=\exp(u^Tv)\mathbbm{1}_k
\ee
\item The support of $X_\infty$ contains the set of polynomial functions $\R[u]^k$, which is dense in $\Cr \infty {\R^m}k$, hence the thesis follows from  \cite[Theorem 6]{DTGRF1}.
\item Let $d\ge r$ or $d=+\infty$. We have that
\be 
\begin{aligned}
\spt(j_u^rX_d)&=\{j^r_uf\colon f\in \R[u]^k \text{ of degree $\le d$}\}=\\
&=\textrm{span}\{j^r_uf\colon f(v)=(v-u)^\beta \text{ with $|\beta|\le d$}\}=\\
&=\textrm{span}\{j^r_uf\colon f(v)=(v-u)^\beta \text{ with $|\beta|\le r$}\}=\\
&=J^r_u(\R^m,\R^k).
\end{aligned}
\ee
The fact that $\P\{j^rX_d\transv W\}=1$ follows  \cite[Theorem 8]{DTGRF1}.
\item Let $A=\{f\in \Cr \infty{\R^m}k\colon j^rf\in V\}$. If $f\in \de A$, then $j^rf\in \overline{V}$ and there is a point $u\in \R^m$ such that $j^r_uf\in \de V$. Let $\partial V$ be stratified as $\partial V=\coprod Z_i$ with each $Z_i$ a submanifold. If $j^rf\transv \de V$ then it means that $j^r f$ is transversal to all the $Z_i$ and there exists one of them which contains $j^r_uf$ (i.e. the jet of $f$ intersect $\partial V$). Therefore the intersection would be transversal \emph{and nonempty}, and then there exists a small Whitney-neighborhood of $f$ such that for every $g$ in this neighborhood $j^rg$ still intersects $\partial V.$ This means that there is a neighborhood of $f$ consisting of maps that are not in $A$, which means $f$ has a neighborhood contained in $A^c$. It follows that $f\notin \overline{A}$ and consequently $f\notin \partial A$, which is a contradiction. Therefore we have that
\be 
\de A\subset \{f\in \Cr \infty{R^m}k\colon f \text{ is not transverse to }\de V\}.
\ee
It follows by point $(4)$ that $\P\{X\in \de A\}=0$, so that we can conclude by points $(2)$ and $(3)$.
\item By previous points, we deduce that we can apply the results described in section $7$ of \cite{DTGRF1}.
\item This proof is postponed to Section \ref{sec:betti}.
\end{enumerate} 
\end{proof}
Given  a $\mathcal{C}^{\infty}$ Gaussian Random Field $X:\R^m\to \R^k$ , let us denote by $[X]$ the probability measure induced on $\mathcal{C}^{\infty}(\R^m, \R^k)$ and defined by:
\be [X](U)=\mathbb{P}(X\in U),\ee
for every $U$ belonging to the Borel $\sigma-$algebra relative to the weak Whitney topology, see \cite{Hirsch} for details on this topology.
Combining Theorem \ref{thm:Kostlan} with Skorohod Theorem \cite[Theorem 6.7]{Billingsley} one gets that it is possible to represent $[X_d]$ with equivalent fields $\tilde{X}_d$ such that $\tilde{X}_d\to \tilde{X}_\infty$ almost surely in $\Cr\infty{\R^m}k$. This is in fact equivalent to point $(2)$ of Theorem \ref{thm:Kostlan}. In other words there is a (not unique) choice of the gaussian coefficients of the random polynomials in $\eqref{eq:Kd}$, for which the covariances $\E\{\tilde{X}_d\tilde{X}_{d'}^T\}$ are such that the sequence converges almost surely. We leave to the reader to check that a possible choice is the following. 
Let $\{\gamma_\beta\}_{\beta\in\N^m}$ be a family of i.i.d. gaussian random vectors $\sim N(0,\mathbbm{1}_k)$ and define for all $d<\infty$
\be \label{eq:askocond}
\tilde{X}_d=\sum_{|\beta|\le d}{d\choose \beta}^\frac12 \gamma_\beta\left(\frac{u}{\sqrt{d}}\right)^{\beta}
\ee
and
\be\label{eq:askoconf}
\tilde{X}_\infty=\sum_{\beta}{\left(\frac{1}{\beta!}\right)}^\frac12 \gamma_\beta u^{\beta}.
\ee
\begin{prop}
$\tilde{X}_d\to \tilde{X}_{\infty}$ in $\Cr \infty{\R^m}k$ almost surely.
\end{prop}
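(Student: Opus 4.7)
The plan is to realize both $\tilde{X}_d$ and $\tilde{X}_\infty$ as \emph{the same} Gaussian series with coefficients multiplied by factors $\phi_d(\beta)$ that increase monotonically to $1$, and then to deduce the almost sure Fréchet convergence from a dominated-convergence argument on the index set $\beta$. Concretely, set $a_\beta := (\beta!)^{-1/2}$ and, for $|\beta|\le d$,
$$
\phi_d(\beta):=\sqrt{\prod_{j=0}^{|\beta|-1}\left(1-\tfrac{j}{d}\right)},\qquad \phi_d(\beta):=0 \text{ if }|\beta|>d.
$$
A direct computation using $\binom{d}{\beta}=\tfrac{d!}{(d-|\beta|)!\,\beta!}$ shows that $\tilde{X}_d$ in \eqref{eq:askocond} can be rewritten as
$$
\tilde{X}_d(u)=\sum_{\beta\in\N^m}\phi_d(\beta)\,a_\beta\,\gamma_\beta\,u^\beta,\qquad \tilde{X}_\infty(u)=\sum_{\beta\in\N^m}a_\beta\,\gamma_\beta\,u^\beta.
$$
Since each factor $(1-j/d)$ is non-decreasing in $d$, one has $\phi_d(\beta)\in[0,1]$ and $\phi_d(\beta)\nearrow 1$ as $d\to\infty$ for every fixed $\beta$.

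Next I would check that on an event $\Omega_0$ of full probability, $\tilde{X}_\infty$ is a real-analytic $\R^k$-valued function of infinite radius of convergence, and moreover the series together with all its termwise derivatives converges \emph{absolutely and uniformly on compacts}. This is essentially the classical statement that the (complexified) Bargmann--Fock series is a.s. entire, and it reduces to a Borel--Cantelli argument: the Gaussian tail bound $\P\{|\gamma_\beta|>|\beta|\}\le 2e^{-|\beta|^2/2}$ is summable in $\beta$, so a.s. $|\gamma_\beta|\le|\beta|$ eventually, while $a_\beta=(\beta!)^{-1/2}$ decays super-exponentially so that $\limsup_{|\beta|\to\infty}(|\gamma_\beta|a_\beta)^{1/|\beta|}=0$ a.s. In particular, for every compact $K\subset\R^m$ and every multi-index $\alpha\in\N^m$ the random constant
$$
M_{K,\alpha}(\omega):=\sum_{\beta\in\N^m}|\gamma_\beta(\omega)|\,a_\beta\,\sup_{u\in K}\bigl|\partial^\alpha u^\beta\bigr|
$$
is a.s. finite, using $\sup_{K}|\partial^\alpha u^\beta|\le |\beta|^{|\alpha|}R^{|\beta|-|\alpha|}$ with $K\subset\{|u|\le R\}$ and a Cauchy--Schwarz bound against $\sum_\beta R^{2|\beta|}/\beta!=e^{mR^2}$.

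Once this absolute summability is in hand, the conclusion is immediate. For $\omega\in\Omega_0$ one has
$$
\sup_{u\in K}\bigl|\partial^\alpha(\tilde{X}_d-\tilde{X}_\infty)(\omega,u)\bigr|\le \sum_{\beta\in\N^m}\bigl(1-\phi_d(\beta)\bigr)\,|\gamma_\beta(\omega)|\,a_\beta\,\sup_{u\in K}|\partial^\alpha u^\beta|,
$$
and since $0\le 1-\phi_d(\beta)\le 1$ with $1-\phi_d(\beta)\to 0$ pointwise in $\beta$, while the summand is dominated by the summable sequence defining $M_{K,\alpha}(\omega)$, the dominated convergence theorem on the counting measure gives $\sup_K|\partial^\alpha(\tilde{X}_d-\tilde{X}_\infty)|\to 0$ a.s. Doing this for every compact $K$ and every multi-index $\alpha$ (which can be done simultaneously on the single full-measure event $\Omega_0$) is precisely almost sure convergence in $\mathcal{C}^\infty(\R^m,\R^k)$ endowed with the weak Whitney topology, which is what we needed.

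The only genuinely non-routine step is establishing almost sure analyticity of $\tilde{X}_\infty$ together with the strong form of absolute convergence required to dominate the derivative sums. This is a standard fact about the Bargmann--Fock field, but it is where all the work lies; once it is in place, the rest is a one-line dominated convergence argument exploiting the monotonicity $\phi_d(\beta)\nearrow 1$.
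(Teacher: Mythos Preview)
The paper does not actually prove this Proposition: the sentence immediately preceding the explicit formulas \eqref{eq:askocond}--\eqref{eq:askoconf} reads ``We leave to the reader to check that a possible choice is the following,'' and no argument is given afterwards. So there is no proof in the paper to compare against; your proposal supplies precisely what the authors omitted.

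Your approach is correct and is the natural direct argument. The key identity
\[
\binom{d}{\beta}^{1/2}d^{-|\beta|/2}=\frac{1}{\sqrt{\beta!}}\sqrt{\prod_{j=0}^{|\beta|-1}\Bigl(1-\frac{j}{d}\Bigr)}=a_\beta\,\phi_d(\beta)
\]
is exactly right, and the monotonicity $\phi_d(\beta)\nearrow 1$ together with dominated convergence on the index set is the clean way to finish once one knows that the Bargmann--Fock series (and all its termwise derivative series) converges absolutely on compacts almost surely.

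One small technical point: the Cauchy--Schwarz you invoke to bound $M_{K,\alpha}(\omega)$ needs slightly more care than your one-line sketch. Splitting naively as $\bigl(\sum_\beta|\gamma_\beta|^2\bigr)^{1/2}\bigl(\sum_\beta R^{2|\beta|}/\beta!\bigr)^{1/2}$ fails because the first factor diverges almost surely. The simplest fix is to take expectations instead: since $\E|\gamma_\beta|=c_k$ is constant,
\[
\E\,M_{K,\alpha}=c_k\sum_{\beta}\frac{|\beta|^{|\alpha|}R^{|\beta|}}{\sqrt{\beta!}}
=c_k\sum_{n\ge 0}n^{|\alpha|}R^{n}\sum_{|\beta|=n}\frac{1}{\sqrt{\beta!}},
\]
and Cauchy--Schwarz on the inner finite sum gives $\sum_{|\beta|=n}(\beta!)^{-1/2}\le \binom{n+m-1}{m-1}^{1/2}(m^n/n!)^{1/2}$, which makes the outer series converge. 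Hence $M_{K,\alpha}<\infty$ almost surely, and the rest of your dominated-convergence argument goes through unchanged.
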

However, we stress the fact that in most situations: when one is interested in the sequence of probability measures $[X_d]$, it is sufficient to know that such a sequence exists.
\subsection{Limit laws for Betti numbers and the generalized square-root law}\label{sec:betti}
\begin{figure}
\includegraphics[scale=0.15]{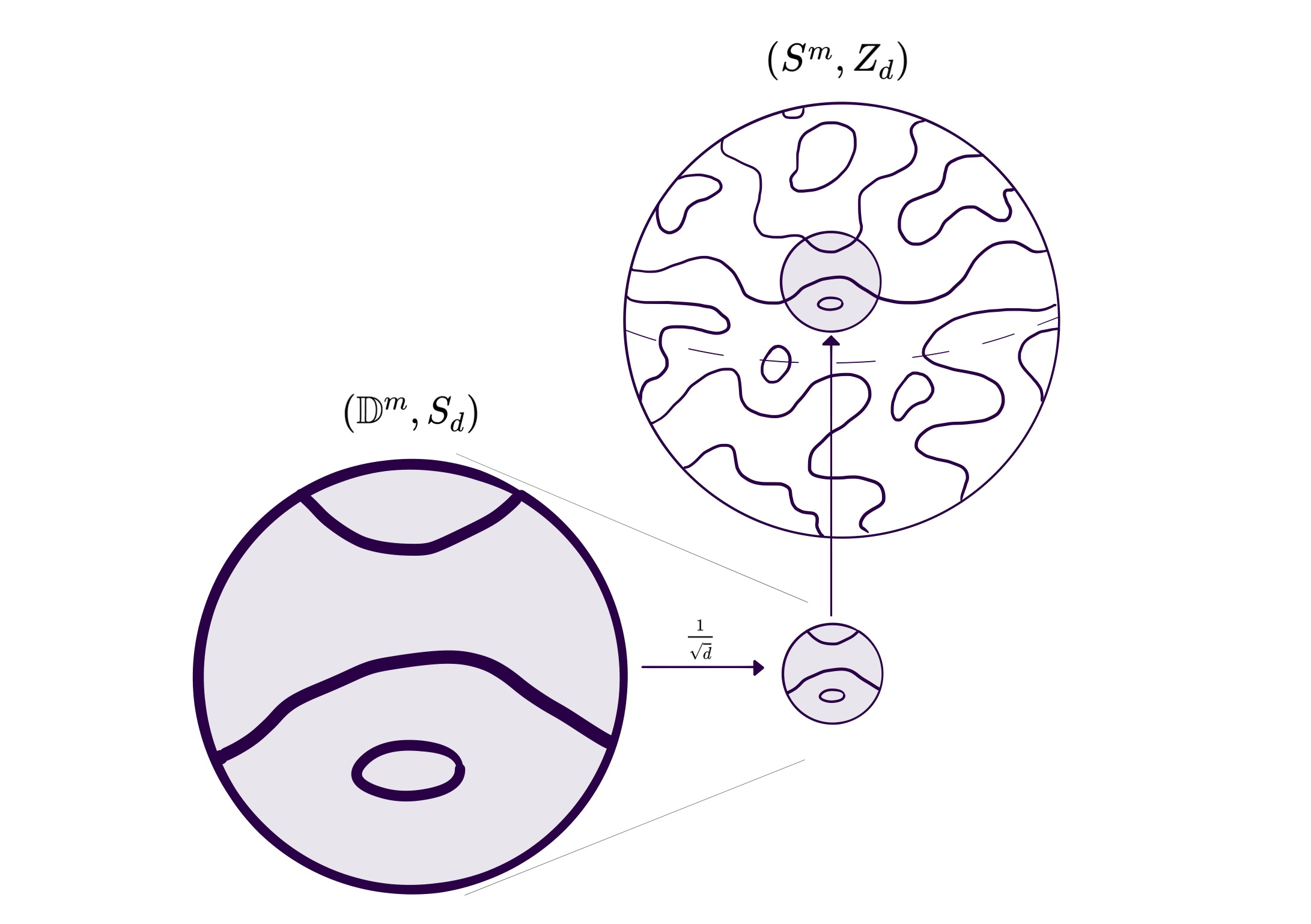}\caption{The random set $S_d=\{X_d=0\}\subset \D^m$ is a rescaled version of $Z_d\cap D(p, d^{-1/2})$, where $Z_d=\{\psi_d=0\}$.}\label{fig:rescale}
\end{figure}

Let $W_0\subset J^r(\R^m,\R^k)$ be a semialgebraic subset.
Consider the random set
\be S_d= \{p\in \mathbb{D}^m\colon j_p^rX_d\in W_0\},\ee
where $X_d\colon \R^m\to \R^k$ is the rescaled Kostlan polynomial from Theorem \ref{thm:Kostlan} (see Figure \ref{fig:rescale}).
We are now in the position of complete the proof of Theorem \ref{thm:Kostlan} by showing point (7).
Let us start by proving the following Lemma.
\begin{lemma}\label{lem:positibetti}
Let $r$ be the codimension of $W_0$ and suppose $0\le i\le m-r\le m-1$. Then \be \E\{b_i(S_\infty)\}>0 .\ee
\end{lemma}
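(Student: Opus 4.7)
The plan is to exhibit a deterministic smooth ``seed'' $\psi$ whose $r$-jet preimage of $W_0$ already has nontrivial $i$-th Betti number inside $\mathbb{D}^m$, then use the fact that $X_\infty$ has full support in the space of smooth maps to show that $X_\infty$ approximates $\psi$ (in the appropriate topology) with positive probability; the $\mathcal{C}^0$-semicontinuity of topology (Theorem~\ref{thm:semiconttop}) will transfer the Betti-number lower bound to the random setting. Since $b_i(S_\infty)$ is a non-negative integer-valued random variable, producing any positive-probability event on which $b_i(S_\infty)\ge 1$ already forces $\E\{b_i(S_\infty)\}>0$.

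For the seed, Whitney-stratify $W_0$ and apply Corollary~\ref{cor:topositive} (either to a closed manifold containing $\mathbb{D}^m$, e.g.\ $M=S^m$, or by mimicking its construction directly on $\R^m$) to produce a smooth $\psi\in\Cr{\infty}{\R^m}{k}$ with $j^r\psi\transv W_0$ such that $C:=(j^r\psi)^{-1}(W_0)\cap\overline{\mathbb{D}^m}$ is a compact subset of $\mathrm{int}(\mathbb{D}^m)$ with $b_i(C)\ge 1$. Inspection of the proof of the corollary shows that its nontrivial topology is concentrated in an arbitrarily small region of the source (around a perturbation of the zero set of the model map $\f$), which we place inside $\mathrm{int}(\mathbb{D}^m)$ by scaling. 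Pick a relatively compact open $N$ with $C\subset N\subset\overline{N}\subset\mathrm{int}(\mathbb{D}^m)$ and set $K=\mathbb{D}^m\setminus N$; then $j^r\psi(K)$ is compact and disjoint from the closed semialgebraic set $W_0$, so $\delta:=\mathrm{dist}(j^r\psi(K),W_0)>0$.

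Now consider the open set $\mathcal{U}\subset\Cr{\infty}{\R^m}{k}$ consisting of maps $f$ whose $r$-jet is $\mathcal{C}^0(\mathbb{D}^m)$-close to $j^r\psi$, closer than both $\delta$ and the threshold $\eta$ required by Theorem~\ref{thm:semiconttop} applied to $j^r\psi$ on a neighbourhood of $\overline{N}$. By the full-support statement Theorem~\ref{thm:Kostlan}(3), $\P(X_\infty\in\mathcal{U})>0$; intersecting with the almost-sure event $\{j^rX_\infty\transv W_0\}$ coming from Theorem~\ref{thm:Kostlan}(4) yields a positive-probability event $E$. On $E$, the distance bound prevents $(j^rX_\infty)^{-1}(W_0)$ from meeting $K$, so $S_\infty\subset N$, and Theorem~\ref{thm:semiconttop} applied on a neighbourhood of $\overline{N}$ gives $b_i(S_\infty)\ge b_i(C)\ge 1$. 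Consequently $\E\{b_i(S_\infty)\}\ge\P(E)>0$.

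The main technical delicacy is this localisation step: Theorem~\ref{thm:semiconttop} is a \emph{global} semicontinuity statement about preimages in the whole source manifold, whereas the lemma concerns preimages intersected with $\mathbb{D}^m$. The closedness of $W_0$ together with the distance argument above is what confines all preimages of perturbations in $\mathcal{U}$ to remain inside $N\subset\mathrm{int}(\mathbb{D}^m)$, reducing the global statement to the compact neighbourhood $\overline{N}$ where it can be applied directly. Everything else is assembly of previously established tools.
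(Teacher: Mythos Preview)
Your proposal is correct and follows the same overall strategy as the paper: produce a seed map via Corollary~\ref{cor:topositive} with nontrivial $i$-th Betti number, then use the full support of $X_\infty$ (Theorem~\ref{thm:Kostlan}(3)) to hit an open neighbourhood of that seed with positive probability.

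The one substantive difference is the tool you use to propagate the Betti-number bound to nearby maps. You invoke the $\mathcal{C}^0$-semicontinuity Theorem~\ref{thm:semiconttop}, requiring only that $j^rX_\infty$ be $\mathcal{C}^0$-close to $j^r\psi$ (plus transversality from Theorem~\ref{thm:Kostlan}(4)). The paper instead observes more simply that the condition ``$j^rf\transv W_0$ and $b_i((j^rf)^{-1}(W_0))=c$'' is already open in the $\mathcal{C}^\infty$ topology (this is just Thom's isotopy lemma: a $\mathcal{C}^{r+1}$-small perturbation of $f$ gives a $\mathcal{C}^1$-small perturbation of $j^rf$, preserving the diffeomorphism type of the transverse preimage). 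Since the full-support statement is for the $\mathcal{C}^\infty$ topology, this weaker openness suffices and avoids the machinery of Theorem~\ref{thm:semiconttop} entirely. Your route works, but is heavier than needed here; on the other hand, your explicit localisation argument (the distance bound forcing $S_\infty\subset N\subset\mathrm{int}(\mathbb{D}^m)$) makes precise a point the paper's three-line proof leaves implicit.
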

\begin{proof}
From Corollary \ref{cor:topositive} we deduce that there exists a function $f\in \Cr \infty{\mathbb{D}^m} k$ such that $j^rf\transv W_0$ and $b_i\left((j^rf)^{-1}(W_0)\right)\neq 0$.
Since the condition on $f$ is open, there is an open neighbourhood $O$ of $f$ where  $b_i((j^rg)^{-1}(W_0))=c>0$ for all $g\in O$. Thus $\P\{b_i(S_\infty)=c\}>0$ because every open set has positive probability for $X_\infty$, by \ref{thm:Kostlan}.3. therefore $\E\{b_i(S_\infty)\}>0$. 
\end{proof}
We complete the proof of Theorem \ref{thm:Kostlan} with the next Proposition.
\begin{prop}
\be\label{eq:Ebetti}
\lim_{d\to \infty}\E\{b_i(S_d)\}=\E\{b_i(S_\infty)\}.
\ee
\end{prop}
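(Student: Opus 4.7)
The plan is to upgrade the convergence in law $X_d\nrw X_\infty$ to almost sure convergence, show that $b_i(S_d)\to b_i(S_\infty)$ almost surely, and then pass the expectation to the limit via a uniform integrability bound. As a first step I would invoke Skorohod's theorem together with Theorem~\ref{thm:Kostlan}(2)---or simply use the explicit coupling \eqref{eq:askocond}--\eqref{eq:askoconf}---to replace $X_d$ and $X_\infty$ by equivalent fields that converge almost surely in $\Cr{\infty}{\R^m}{k}$. On compact subsets this yields $\mathcal{C}^1$ convergence of the $r$-jets, which is the form of convergence I will use.

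For the almost sure convergence of Betti numbers, the event on which $j^rX_\infty\transv W_0$ has full probability by Theorem~\ref{thm:Kostlan}(4), and applying the same argument on a slightly enlarged disk $(1+\delta)\mathbb{D}^m$ one may also assume that $S_\infty$ meets $\partial\mathbb{D}^m$ transversally on a subevent of full probability. On this event $S_\infty\subset\overline{\mathbb{D}^m}$ is a compact Whitney stratified subset with finite total Betti number. Since $j^rX_d\to j^rX_\infty$ in $\mathcal{C}^1$ on a neighborhood of $\overline{\mathbb{D}^m}$, for $d$ large $j^rX_d\transv W_0$ on that neighborhood, and the stratified version of Thom's isotopy lemma (applied simultaneously to $W_0$ and to the boundary stratum $\partial\mathbb{D}^m$) provides an ambient isotopy of $\overline{\mathbb{D}^m}$ carrying $S_\infty$ onto $S_d$. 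Therefore $b_i(S_d)=b_i(S_\infty)$ for all $d$ sufficiently large, almost surely.

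To pass to the limit in expectation I need uniform integrability of the family $\{b_i(S_d)\}_d$. For a fixed generic linear polynomial $g\colon\R^m\to\R$, Theorem~\ref{thm:strat} together with Remark~\ref{rem:strat}, applied to the pair $(X_d,g)$, yields almost surely
\be
b_i(S_d)\le N_{W_0}\,\#\bigl\{p\in\mathbb{D}^m\colon j^{r+1}_p(X_d,g)\in\hat{W}_0\bigr\}.
\ee
The right-hand side counts the intersections of the jet of a Gaussian field with a semialgebraic subset of codimension $m$, so its second moment can be written, via a two-point Kac--Rice identity for factorial moments, as the integral over $\mathbb{D}^m\times\mathbb{D}^m$ of a joint density $\rho_d^{(2)}$ built from the covariance kernel of $(X_d,g)$ and its derivatives up to order $r+1$. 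Since these kernels converge in $\mathcal{C}^\infty$ on compacta by Theorem~\ref{thm:Kostlan}(1)--(2) and the limiting field is non-degenerate off the diagonal, $\rho_d^{(2)}$ is uniformly locally bounded in $d$, giving $\sup_d\E\{b_i(S_d)^2\}<\infty$; combined with the almost sure convergence this yields the desired convergence of expectations.

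The main technical obstacle is this uniform two-point estimate: one must control $\rho_d^{(2)}$ near the diagonal, where the joint Gaussian vector degenerates. The natural remedy is to isolate a thin tubular neighborhood of the diagonal and bound its contribution using the one-point density supplied by Theorem~\ref{thm:Kostlan}(6), while on its complement one exploits the smooth convergence of the covariances to reduce non-degeneracy for all large $d$ to the (known) non-degeneracy of the Bargmann--Fock field.
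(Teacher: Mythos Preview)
Your outline is sound and shares its skeleton with the paper's argument: Skorohod coupling, almost sure convergence of $b_i(S_d)$ via a Thom-isotopy type argument, and then a domination step to pass the limit under the expectation. The genuine divergence is in how that last step is executed.

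You pick a \emph{fixed} generic linear $g$ and aim for a uniform bound on $\E\{N_d^2\}$ via a two-point Kac--Rice formula. This is workable in principle, but the diagonal control you sketch is a real technical burden: for a general codimension-$m$ semialgebraic $\hat W_0$ (not just a zero-set condition) the near-diagonal behavior of $\rho_d^{(2)}$ has to be analyzed carefully, and ``bounding its contribution by the one-point density'' is not by itself a second-moment statement. You would need something like a factorial-moment estimate showing that two nearby hits force an almost-degenerate jet, uniformly in $d$; this can be done but it is substantially more work than your sketch suggests.

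The paper sidesteps all of this by choosing $g$ \emph{random}: it takes $Y_d=(X_d,x_d)$ with $x_d$ an independent rescaled Kostlan component, so that $Y_d$ is itself the rescaled Kostlan of type $(m,k+1)$. Then the dominating count $N_d=\#\{(j^{r+1}Y_d)^{-1}(\hat W')\}$ falls directly under Theorem~\ref{thm:Kostlan}(6), giving $\E N_d\to\E N_\infty$ with no second moments at all. The conclusion follows from Fatou applied to the nonnegative sequence $N_d+N_\infty-|b_d-b_\infty|$:
\[
2\E N_\infty \le \liminf_d \E\{N_d+N_\infty-|b_d-b_\infty|\}=2\E N_\infty-\limsup_d\E|b_d-b_\infty|,
\]
forcing $\E|b_d-b_\infty|\to 0$. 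So the paper trades your $L^2$ bound for an $L^1$ convergence of the dominating variable, obtained essentially for free by making the Morse function part of the Kostlan model. Your route would eventually close, but the paper's choice of a random auxiliary $g$ is the idea that removes the hard step.
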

\begin{proof}
Let $b_i(S_d)=b_d$.
Define a random field $Y_d=(X_d,x_d)\colon \R^m\to \R^{k}\times\R$ to be the rescaled Kostlan polynomial of type $(m,k+1)$.
Consider the semialgebraic subset $W'=W\cap J^r(\mathbb{D}^m,\R^k)$ of the real algebraic smooth manifold $J^r(\R^m,\R^k)$ and observe that $S_d=(j^rX_d)^{-1}(W')$ is compact. Now Theorem \ref{thm:strat}, along with Remark \ref{rem:strat}, implies the existence of a semialgebraic submanifold $\hat{W'}\subset J^{r+1}(\R^m,\R^{k+1})$ of codimension $m$ and a constant $C$, such that 
\be 
b_d\le C\#\left\{\left(j^{r+1}(Y_d)\right)^{-1}(\hat{W'})\right\}=:N_d
\ee
whenever $j^rX_d \transv W'$ and $j^{r+1}Y_d\transv \hat{W'}$, hence almost surely, because of Theorem \ref{thm:Kostlan}.4.
Since $Y_d\nrw Y_\infty$ by \ref{thm:Kostlan}.2, we see that $[b_d,N_d]\nrw [b_\infty,N_\infty]$ and it is not restrictive to assume that $(b_i,N_d)\to (b_i,N_\infty)$ almost surely, by Skorokhod's theorem (see \cite[Theorem 6.7]{Billingsley}). Moreover $\E\{N_d\}\to \E\{N_\infty\}$ by Theorem \ref{thm:Kostlan}.6.
Now we can conclude with Fatou's Lemma as follows 
\be 
\begin{aligned}
2\E\{N_\infty\}&=\E\{\liminf_d N_d+N_\infty-|b_d-b_\infty|\}\le \\
&\le \liminf_d\E\{ N_d+N_\infty-|b_d-b_\infty|\}=\\
&=
2\E\{N_\infty\}-\limsup_d\E\{|b_d-b_\infty|\},
\end{aligned}
\ee 
so that
\be 
\limsup_d\E\{|b_d-b_\infty|\}\le 0.
\ee
\end{proof}


In the sequel, with the scope of keeping a light notation, for a given $W\subset J^{r}(S^m, \R^k)$ and $\psi:S^m\to \R^k$ we will denote by $Z_d\subseteq S^m$ the set
\be Z_d=j^r\psi^{-1}(W).\ee
If $W$ is of codimension $m$, then by Theorem \ref{thm:Kostlan}, $Z_d$ is almost surely a finite set of points and the expectation of this number is given by next result.
\begin{thm}[Generalized square-root law for cardinality]\label{thm:sqrlaw}
Let $W\subset J^r(S^m,\R^k)$ be a semialgebraic intrinsic subset of codimension $m$. Then there is a constant $C_W>0$ such that:
\be 
\E\{\#Z_d\}=C_W d^{\frac{m}{2}}+ O(d^{\frac{m}{2}-1}).
\ee
Moreover, the value of  $C_W$ can be computed as follows. Let $Y_\infty=e^{-\frac{|u|^2}{2}}X_\infty\in \g \infty{\mathbb{D}^m}k$ and let $W_0\subset J^r(\mathbb{D}^m,\R^k)$ be the local model for $W$. Then
\be 
C_W=m\frac{\vol (S^m)}{\vol (S^{m-1})} \E\#\{u\in\mathbb{D}^m\colon j^r_uY_{\infty}\in W_0\}.
\ee
\end{thm}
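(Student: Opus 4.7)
The plan is to apply Kac--Rice globally on $S^m$, use the orthogonal invariance of the Kostlan distribution to reduce to a single base point, and then exploit the rescaling $\bar x \mapsto u/\sqrt d$ to extract the $d^{m/2}$ scaling. Since $W$ has codimension $m$ and $j^r\psi_d \transv W$ almost surely (Theorem \ref{thm:Kostlan}(4)), the set $Z_d$ is a.s.\ a finite set of points, and Theorem \ref{thm:Kostlan}(6) produces a Kac--Rice density $\rho_d\in L^\infty(S^m)$ with $\E\#Z_d = \int_{S^m}\rho_d$. By Proposition \ref{covkostlanprop}(1) the law of $\psi_d$ is $O(m+1)$-invariant and, combined with the intrinsicity of $W$, this forces $\rho_d$ to be constant on $S^m$. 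Hence $\E\#Z_d = \vol(S^m)\,\rho_d(p_0)$ for any chosen base point $p_0\in S^m$.

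The next step is to express $\rho_d(p_0)$ in terms of the Kac--Rice density of the rescaled field at the origin. Take $p_0 = (1,0,\dots,0)$ and use the chart $\bar x \mapsto (1,\bar x)/\sqrt{1+|\bar x|^2}$, whose Jacobian at the origin equals $1$. In this chart $\psi_d$ pulls back to $\tilde\psi_d(\bar x) = (1+|\bar x|^2)^{-d/2}\,p_d(\bar x)$, and with $u = \sqrt d\,\bar x$,
\[ \tilde X_d(u) := \tilde\psi_d(u/\sqrt d) = (1+|u|^2/d)^{-d/2}\,X_d(u). \]
Intrinsicity of $W$ (Definition \ref{def:intrinsic}) implies that the model $W_0$ is invariant under the source diffeomorphism $\bar x \mapsto \sqrt d\,\bar x$, so counting the zeros of $j^r\tilde X_d \in W_0$ on a set $A$ coincides with counting the zeros of $j^r\tilde\psi_d \in W_0$ on $A/\sqrt d$. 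Applying Kac--Rice on both sides and changing variables yields $\rho_d(p_0) = d^{m/2}\,\tilde\rho_d(0)$, where $\tilde\rho_d$ denotes the Kac--Rice density of $\tilde X_d$ on $\R^m$.

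A direct computation now shows that $Y_\infty$ is in fact \emph{stationary}: indeed
\[ K_{Y_\infty}(u,v) = e^{-|u|^2/2-|v|^2/2}\,e^{u\cdot v}\,\mathbbm{1}_k = e^{-|u-v|^2/2}\,\mathbbm{1}_k, \]
so the density $\rho_{Y_\infty}$ is constant and equals $\vol(\mathbb{D}^m)^{-1}\,\E\#\{u\in\mathbb{D}^m\colon j^r_u Y_\infty\in W_0\}$. On the other hand the $\mathcal{C}^\infty_{loc}$ convergence $K_{\tilde X_d}\to K_{Y_\infty}$ places us in the setting of Theorem \ref{thm:Kostlan}(2)--(6), so $\tilde\rho_d(0)\to\rho_{Y_\infty}(0)$ and therefore
\[ \E\#Z_d = \vol(S^m)\,\rho_{Y_\infty}(0)\,d^{m/2}\,(1+o(1)). \]
Using $\vol(\mathbb{D}^m) = \vol(S^{m-1})/m$ rewrites the leading coefficient as $m\,\vol(S^m)/\vol(S^{m-1})\cdot \E\#\{u\in\mathbb{D}^m\colon j^r_u Y_\infty\in W_0\}$, matching the claimed value of $C_W$.

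The main obstacle is upgrading the $o(1)$ to the sharp remainder $O(d^{m/2-1})$, i.e.\ showing that $\tilde\rho_d(0) = \rho_{Y_\infty}(0) + O(1/d)$. This requires a quantitative version of the covariance convergence: a direct Taylor expansion of $[(1+u\cdot v/d)^2/((1+|u|^2/d)(1+|v|^2/d))]^{d/2}$ gives $K_{\tilde X_d}(u,v) - K_{Y_\infty}(u,v) = \tfrac{1}{d}R(u,v) + O(1/d^2)$ uniformly on compact sets, together with all partial derivatives, for an explicit smooth function $R$. Since $\tilde\rho_d(0)$ depends smoothly on the finitely many covariance derivatives at the origin that enter the Kac--Rice integrand at a point of transverse codimension-$m$ incidence, this $O(1/d)$ error transfers to $\tilde\rho_d(0)$, and after multiplication by $\vol(S^m)\,d^{m/2}$ yields the desired remainder $O(d^{m/2-1})$.
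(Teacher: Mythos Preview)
Your argument is correct and follows the same overall strategy as the paper: use $O(m+1)$-invariance of the Kostlan law together with intrinsicity of $W$ to make the Kac--Rice density constant on $S^m$, pass to the rescaled field $Y_d=(1+|u|^2/d)^{-d/2}X_d$, and take the limit $Y_d\Rightarrow Y_\infty$. The only difference is in the bookkeeping of where the factor $d^{m/2}$ appears. The paper integrates over the small spherical cap $A=a_d(\mathbb{D}^m)$, writes $\E\#Z_d=\E\#\{u\in\mathbb{D}^m:j^r_uY_d\in W_0\}\cdot\vol(S^m)/\vol(A)$, and extracts $d^{m/2}$ (and the $O(d^{m/2-1})$ error) from the explicit expansion of the volume ratio $\vol(S^m)/\vol(A)$. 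You instead evaluate the density at a single point, pull out the exact factor $d^{m/2}$ from the change of variables $u=\sqrt d\,\bar x$, and then recover the disk count from your observation that $K_{Y_\infty}(u,v)=e^{-|u-v|^2/2}\mathbbm{1}_k$ is stationary, so $\rho_{Y_\infty}$ is constant on $\mathbb{D}^m$. Both routes ultimately need, for the sharp remainder $O(d^{m/2-1})$, that the local Kac--Rice quantity converges to its limit at rate $O(1/d)$; you make this explicit via the Taylor expansion of $K_{\tilde X_d}$ about $K_{Y_\infty}$, while the paper's final displayed line relies on the same rate without spelling it out.
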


In order to prove Theorem \ref{thm:sqrlaw}, we will need a preliminary Lemma, which ensures that we will be in the position of using the generalized Kac-Rice formula of point (6) from Theorem \ref{thm:Kostlan}.
\begin{lemma}\label{lem:intrilinstab}
If $W\subset J^r(M,\R^k)$ is intrinsic, then $W$ is transverse to fibers.
\end{lemma}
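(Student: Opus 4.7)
The plan is to exploit the intrinsic hypothesis to produce a large group of symmetries of $W$, and then use them to generate all fiber-transverse directions at every point.

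\medskip

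\textbf{Step 1: Diffeomorphism invariance.} To any diffeomorphism $\phi\colon M\to M$ I would associate the bundle automorphism
\be
\Phi\colon J^r(M,\R^k)\to J^r(M,\R^k),\qquad \Phi(j^r_pf)=j^r_{\phi(p)}(f\circ\phi^{-1}),
\ee
which covers $\phi$. The claim is that $\Phi(W)=W$. To check this, fix an embedding $\varphi\colon\mathbb{D}^m\hookrightarrow M$ and set $\varphi'=\phi\circ\varphi\colon\mathbb{D}^m\hookrightarrow M$. A direct unraveling of the definitions using the chain rule shows that on $J^r(\varphi(\mathbb{D}^m),\R^k)$,
\be
(j^r\varphi')^*\circ \Phi = (j^r\varphi)^*,
\ee
i.e.\ the pullback by $\varphi'$ absorbs $\Phi$ and reduces to the pullback by $\varphi$. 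Applying Definition~\ref{def:intrinsic} to both $\varphi$ and $\varphi'$ yields
\be
(j^r\varphi')^*\bigl(\Phi(W\cap J^r(\varphi(\mathbb{D}^m),\R^k))\bigr)=W_0=(j^r\varphi')^*\bigl(W\cap J^r(\varphi'(\mathbb{D}^m),\R^k)\bigr),
\ee
and since $(j^r\varphi')^*$ is a bijection onto $J^r(\mathbb{D}^m,\R^k)$, $\Phi$ sends $W\cap J^r(\varphi(\mathbb{D}^m),\R^k)$ onto $W\cap J^r(\varphi'(\mathbb{D}^m),\R^k)$. Since every point of $M$ lies in the image of some embedded disk, this gives $\Phi(W)=W$.

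\medskip

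\textbf{Step 2: Generating fiber directions.} Now fix $\sigma\in W$ with $\pi(\sigma)=p\in M$ and an arbitrary $v\in T_pM$. Extend $v$ to a compactly supported vector field on $M$ and let $\phi_t$ be its flow, so that $\phi_0=\mathrm{id}$ and $\tfrac{d}{dt}\phi_t(p)|_{t=0}=v$. The induced automorphisms $\Phi_t$ preserve $W$ by Step 1, hence
\be
\sigma_t:=\Phi_t(\sigma)\in W\qquad\text{for every }t,
\ee
and $\pi(\sigma_t)=\phi_t(p)$. Differentiating at $t=0$ gives $d\pi(\dot\sigma_0)=v$, showing that the restriction $d\pi|_{T_\sigma W}\colon T_\sigma W\to T_pM$ is surjective. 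Since $\ker d\pi_\sigma=T_\sigma J^r_p(M,\R^k)$, surjectivity is exactly the transversality $T_\sigma W+T_\sigma J^r_p(M,\R^k)=T_\sigma J^r(M,\R^k)$.

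\medskip

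\textbf{Main obstacle.} The only non-cosmetic point is verifying the naturality identity $(j^r\varphi')^*\circ\Phi=(j^r\varphi)^*$ on $J^r(\varphi(\mathbb{D}^m),\R^k)$; once this is in hand the rest is a one-line symmetry argument. Because the intrinsic property is stated only for disk embeddings, I must also make sure that a global diffeomorphism $\phi$ of $M$ can be tested against it---but this is automatic, since $\phi\circ\varphi$ is itself an embedding of $\mathbb{D}^m$ into $M$. If one only knew a stratified version of $W$, the same argument applies stratum by stratum because $\Phi$ is a bundle diffeomorphism and therefore respects any canonical stratification of $W$.
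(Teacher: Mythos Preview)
Your argument is correct, but it is organized differently from the paper's. The paper localizes immediately: using the identification $J^r(\R^m,\R^k)\cong \R^m\times J^r_0(\R^m,\R^k)$, it applies the intrinsic hypothesis only to the specific family of embeddings $\tau_{v-u}\circ i_u$ (translations composed with disk inclusions) and deduces that $(v,j^r_0f)\in W$ implies $(u,j^r_0f)\in W$ for every $u$; hence $W=\R^m\times\bar W$ is literally a product, and transversality to fibers is immediate. You instead establish invariance of $W$ under the full diffeomorphism group of $M$ and then run a flow argument to realize every base direction inside $T_\sigma W$. What you gain is a coordinate-free argument that works directly on $M$; what the paper gains is a sharper structural conclusion (the product form), and its proof never needs to speak of $T_\sigma W$, so it applies verbatim when $W$ is merely a set. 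Your remark that the stratified case follows ``stratum by stratum because $\Phi$ respects any canonical stratification'' is the one place where you are a bit loose---it is true for intrinsic stratifications, but the paper's product description sidesteps this entirely.
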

\begin{proof}
Since the result is local it is sufficient to prove it in the case when $M=\R^m$. 
In this case we have a natural identification (see \cite[Chapter 2, Section 4]{Hirsch})

For any point $u\in \R^m$ we consider the embedding $ i_u \colon \mathbb{D}^m\to\R^m$ obtained as the isometric inclusion in the disk with center $u$ and let $\tau_u\colon \R^m\to \R^m$ be the translation map $x\mapsto u+x$.
Let $u,v\in \R^m$ be two points with distance smaller than $1$, he fact that the submanifold $W$ is intrinsic implies that $j^r_vf\in W$ if and only if $(j^ri_u)^*(j^r_vf)\in W_0$, where $W_0\subset J^r(\mathbb{D}^m,\R^k)$ is the model for $W$. From this we deduce that also the jet $j^r_u(f\circ \tau_{v-u})$ is in $W$, since:
\be \begin{aligned}
\left(j^ri_v\right)^*\left(j^r_{v}f\right)&=
j^r(\tau_{v-u}\circ i_u)^*(j^r_vf)
\\
&= 
\left(j^ri_u\right)^*\left(j^r\left(\tau_{v-u}\right)^*\left(j^r_{\tau_{v-u}(u)}f\right)\right)
\\
&= 
\left(j^ri_u\right)^*\left(j^r_u\left(f\circ\tau_{v-u}\right)\right).
\end{aligned}
\ee
By interchanging the role of $u$ and $v$, we conclude that $j^r_u(f\circ\tau_{v-u})\in W$ if and only if $j^r_vf\in W$. Notice that such statement is thus true for any couple of points $u,v\in\R^m$, regardless of their distance.

We thus claim that $T(W)$ is of the form $\R^m\times \bar{W}$, under the natural identification (see \cite[Sec. 2.4]{Hirsch}):
\be 
T\colon J^r(\R^m,\R^k)\cong \R^m\times J^r_0(\R^m,\R^k), \qquad j^r_uf\mapsto (u,j^r_0(f\circ \tau_{u})).
\ee
To see this, observe that if $(v,j^r_0g)\in T(W)$, hence $(v,j^r_0g)= T(j^r_vf)$ for a jet $j^r_vf\in W$ such that $ 
g=f\circ \tau_{v}$,
then $(u,j^r_0g)=T\left(j^r_u(f\circ \tau_{v-u})\right)\in T(W)$.
\end{proof}
\begin{figure}
\includegraphics[scale=0.11]{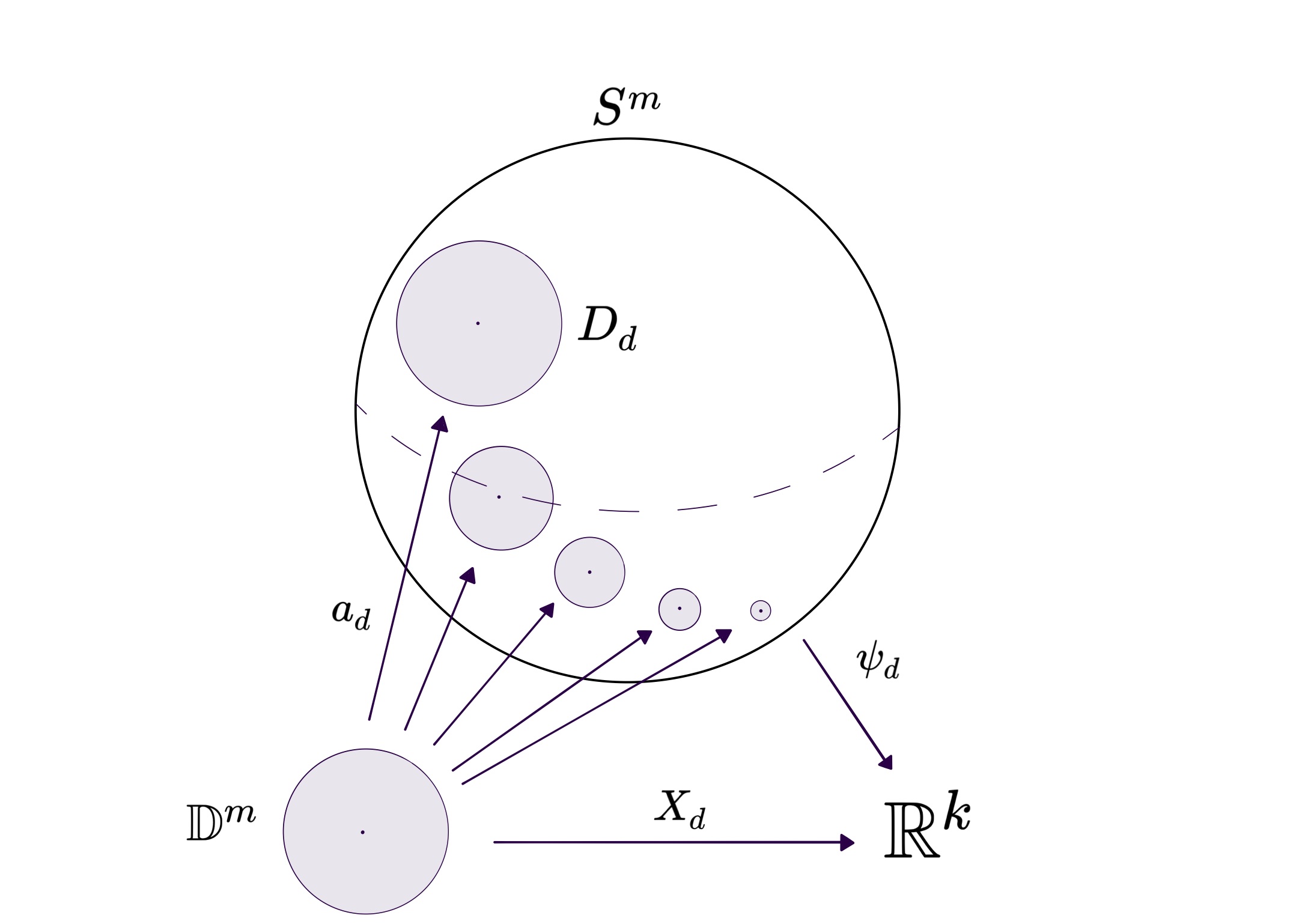}\caption{A family of shrinking embedding of the unit disk.}\label{fig:emb}
\end{figure}
The reason why we consider intrinsic submanifold is to be able to easily pass to the rescaled Kostlan polynomial $X_d\in \g \infty{\mathbb{D}^m}k$ by composing $\psi_d$ with the embedding of the disk $a_d^R$ defined by: 
\be\label{eq:diskemb}
a_d^R\colon \mathbb{D}^m\hookrightarrow S^m,\quad 
u\mapsto \frac{R\begin{pmatrix}
1 \\ \frac{u}{\sqrt{d}}
\end{pmatrix}}{\sqrt{\left(1+\frac{|u|^2}{d}\right)}}
\ee
for any $R\in O(m+1)$ (see Figure \ref{fig:emb}).
\begin{proof}[Proof of Theorem \ref{thm:sqrlaw}]
Let us consider the set function $\mu_d\colon \mathcal{B}(S^m)\mapsto \R$ such that $A\mapsto \E\{\#(j^rX_d)^{-1}(W)\cap A\}$. It is explained in \cite{DTGRF1} that $\mu_d$ is a Radon measure on $S^m$. Because of the invariance under rotation of $P_d$, by Haar's theorem $\mu$ needs to be proportional to the volume measure. Therefore for any Borel subset $A\subset S^m$ we have $\E\{\#Z_d\}=\mu_d(S^m)=\mu_d(A)\vol(A)^{-1}\vol(S^m)$. 
Define $Y_d\in \g \infty {\mathbb{D}^m}k$ as
\be\label{eq:Y}
Y_d= \left(1+\frac{|u|^2}{d}\right)^{-\frac{d}2}X_d.
\ee
Observe that $Y_d\nrw Y_\infty=\exp(-\frac{|u|^2}{2})X_\infty$ and that $Y_d$ is equivalent to the GRF $\psi_d\circ a^R_d$ for any $R\in O(m+1)$.

Now let $W_0\subset J^r(\mathbb{D}^m,\R^k)$ be the (semialgebraic) model of $W$. By the same proof of point (7) from Theorem \ref{thm:Kostlan}, adapted to $Y_d$, there is a convergent sequence of functions $\rho_d\to \rho_{+\infty}\in L^1(\mathbb{D}^m)$ such that 
\be 
\E\{\#(j^rY_d)^{-1}(W_0)\}=\int_{\mathbb{D}^m}\rho_d\to \int_{\mathbb{D}^m}\rho_\infty= \E\{\#(j^rY_\infty)^{-1}(W_0)\}.
\ee
In conclusion we have for $A=a^R_d(\mathbb{D}^m)$, as $d\to +\infty$
\be 
\begin{aligned}
\E\{\#Z_d\}&=\mu_d(A)\vol(A)^{-1}\vol(S^m)\\
&=\E\{\#(j^rY_d)^{-1}(j^r\f^*(W))\}\vol(A)^{-1}\vol(S^m)\\
&=\E\{\#(j^rY_d)^{-1}(W_0)\}\left(\frac{\int_0^{\pi}|\sin\theta|^{m-1}d\theta}{\int_0^{\arctan\left(d^{-\frac12}\right)}|\sin\theta|^{m-1}d\theta} \right)\\
&=\E\{\#(j^rY_\infty)^{-1}(W_0)\}m\frac{\vol(S^m)}{\vol(S^{m-1})}d^{\frac{m}{2}}+O(d^{\frac{m}{2}-1}).
\end{aligned}
\ee

\end{proof}
Building on the previous results, we can now prove the general case for Betti numbers of a random singualrity.
\begin{thm}[Generalized square-root law for Betti numbers]\label{thm:bettiorder}
Let $W\subset J^r(S^m,\R^k)$ be a closed semialgebraic intrinsic (as defined in Definition \ref{def:intrinsic}) of codimension $1\le l\le m$. Then there are constants $b_W, B_W > 0$ depending only on $W$ such that
\be \label{eq:ineqbetti}
b_W d^{\frac{m}{2}}\le\E\{b_i(Z_d)\}\le B_W d^{\frac{m}{2}}\quad \forall i=0,\dots, m-l
\ee
and $\E\{b_i(Z_d)\}=0$ for all other $i$.
\end{thm}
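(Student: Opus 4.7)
\emph{Upper bound.} The strategy is to introduce an auxiliary Kostlan component and reduce to the codimension-$m$ case already handled by Theorem \ref{thm:sqrlaw}. Let $g$ be a Kostlan polynomial of degree $d$ independent of $\psi_d$, and set $\Psi_d=(\psi_d,g)$, which is a Kostlan map of type $(d,m,k+1)$. Theorem \ref{thm:strat} together with Remark \ref{rem:strat} produces a semialgebraic subset $\hat{W}\subset J^{r+1}(S^m,\R^{k+1})$ of codimension $m$, which inherits intrinsicness from $W$, such that whenever $j^{r+1}\Psi_d\transv\hat{W}$ (almost surely by Theorem \ref{thm:Kostlan}(4))
\be
b_i(Z_d)\leq N_W\cdot\#\bigl((j^{r+1}\Psi_d)^{-1}(\hat{W})\bigr).
\ee
Taking expectations and applying Theorem \ref{thm:sqrlaw} to the intrinsic codimension-$m$ set $\hat{W}$ yields $\E b_i(Z_d)\leq B_W d^{m/2}$.

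\emph{Lower bound: isolated local configuration.} The plan is to tile $S^m$ with $\Theta(d^{m/2})$ disjoint spherical disks of radius $\sim d^{-1/2}$, each of which carries, with positive probability, a local configuration that contributes $\geq 1$ to $b_i(Z_d)$ and is topologically isolated from the rest. Fix $i\in\{0,\ldots,m-l\}$, let $W_0\subset J^r(\mathbb{D}^m,\R^k)$ be the intrinsic model of $W$, and write $\tfrac12\mathbb{D}^m$ for the open disk of radius $\tfrac12$. Define $\mathcal{A}_i\subset\mathcal{C}^\infty(\mathbb{D}^m,\R^k)$ as the set of $f$ such that $j^rf\transv W_0$, $(j^rf)^{-1}(W_0)\subset\tfrac12\mathbb{D}^m$, and $b_i\bigl((j^rf)^{-1}(W_0)\bigr)\geq 1$. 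The set $\mathcal{A}_i$ is open in the weak Whitney topology: transversality and Betti numbers are locally constant by Thom's isotopy lemma, and the containment condition is the open condition that $j^rf$ send the compact annulus $\mathbb{D}^m\setminus\tfrac12\mathbb{D}^m$ into the open set $J^r(\mathbb{D}^m,\R^k)\setminus W_0$. It is also nonempty: build a non-holonomic section $\sigma$ with the desired $i$-th Betti number and with $\sigma^{-1}(W_0)\subset\tfrac14\mathbb{D}^m$ (run the construction from the proof of Corollary \ref{cor:topositive} with the discrete set $C\subset\tfrac14\mathbb{D}^m$, and extend $\sigma$ into $J^r\setminus W_0$ outside $\tfrac14\mathbb{D}^m$); then Holonomic Approximation \cite[p. 22]{eliash} combined with Theorem \ref{thm:semiconttop} produces a smooth $f$ with $j^rf$ $\mathcal{C}^0$-close to $\sigma$, preserving the Betti number lower bound (by $\mathcal{C}^0$-semicontinuity of topology) and the annular avoidance (open condition), and Thom transversality ensures (a).

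\emph{Lower bound: tiling, vanishing, and main obstacle.} Full support of $X_\infty$ (Theorem \ref{thm:Kostlan}(3)) together with $Y_\infty=e^{-|u|^2/2}X_\infty$ gives $\P(Y_\infty\in\mathcal{A}_i)>0$, and combined with $Y_d\nrw Y_\infty$ (Theorem \ref{thm:Kostlan}(2)) and the Portmanteau inequality for open sets, $\liminf_d\P(Y_d\in\mathcal{A}_i)\geq p>0$. Choose $N_d=\Theta(d^{m/2})$ rotations $R_\alpha\in O(m+1)$ so that the spherical disks $A_\alpha:=a_d^{R_\alpha}(\mathbb{D}^m)$ are pairwise disjoint in $S^m$; by Kostlan's $O(m+1)$-invariance (Proposition \ref{covkostlanprop}) each $\psi_d\circ a_d^{R_\alpha}$ is equidistributed with $Y_d$, so the event $E_\alpha:=\{\psi_d\circ a_d^{R_\alpha}\in\mathcal{A}_i\}$ has probability $\geq p$ for $d$ large. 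On $E_\alpha$, the set $Z_d\cap A_\alpha\subset a_d^{R_\alpha}(\tfrac12\mathbb{D}^m)$ is a clopen subset of $Z_d$ (its closure is compactly contained in $A_\alpha$, and the $A_\alpha$ are disjoint) with $b_i\geq 1$. Additivity of Betti numbers over disjoint clopen pieces yields
\be
\E b_i(Z_d)\geq\sum_{\alpha=1}^{N_d}\P(E_\alpha)\geq N_d\,p\geq b_W d^{m/2}.
\ee
For $i>m-l$, almost sure transversality forces $\dim Z_d\leq m-l$, so $b_i(Z_d)=0$. The main technical obstacle will be realizing the event $\mathcal{A}_i$: simultaneously producing a nontrivial $i$-th Betti number inside the inner half-disk and clean avoidance of $W_0$ on the outer annulus, while keeping transversality, requires a careful joint application of Corollary \ref{cor:topositive}, the Holonomic Approximation Theorem, and the $\mathcal{C}^0$-semicontinuity Theorem \ref{thm:semiconttop}.
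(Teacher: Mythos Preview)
Your upper bound is exactly the paper's argument. For the lower bound the paper also tiles $S^m$ by $\Theta(d^{m/2})$ disjoint disks $a_d^R(\mathbb{D}^m)$ and uses rotational invariance, but the local input is different: the paper invokes the convergence $\E b_i(S_d)\to\E b_i(S_\infty)>0$ from Theorem \ref{thm:Kostlan}(7) and Lemma \ref{lem:positibetti}, whereas you use the probability of an explicit open event $\mathcal{A}_i$ together with Portmanteau. Your route is in some sense more elementary (it avoids point (7), whose proof needs the Morse-theoretic domination and Fatou), and your confinement condition $(j^rf)^{-1}(W_0)\subset\tfrac12\mathbb{D}^m$ makes the passage from local pieces to global Betti numbers cleaner than the paper's jump from $b_i(Z_d^R)$ to $\E b_i(S_d)$.

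There is one genuine gap in your sketch of nonemptiness of $\mathcal{A}_i$. The Holonomic Approximation Theorem only produces a holonomic section $\mathcal{C}^0$-close to $\sigma$ \emph{in a neighbourhood of a perturbed submanifold} $\tilde S$, not on all of $\mathbb{D}^m$; so the annular avoidance $(j^rf)^{-1}(W_0)\cap(\mathbb{D}^m\setminus\tfrac12\mathbb{D}^m)=\emptyset$, which is a condition on the outer annulus, is not preserved by that step as written. A clean repair is to bypass the holonomic approximation entirely for this purpose: apply Corollary \ref{cor:topositive} directly on $S^m$ to obtain $\psi$ with $j^r\psi\transv W$ and $b_i((j^r\psi)^{-1}(W))\ge 1$; since the preimage has dimension $\le m-1$ it misses some point $p\in S^m$, and pulling back along a chart $S^m\setminus\{p\}\cong\R^m$ gives $f\in\mathcal{C}^\infty(\R^m,\R^k)$ with $(j^rf)^{-1}(W_0)$ compact and $b_i\ge 1$. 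Now use intrinsicness: composing with a dilation $\phi(u)=Ru$ (an embedding $\mathbb{D}^m\hookrightarrow\R^m$) for $R$ large enough brings the whole preimage into $\tfrac14\mathbb{D}^m$ while preserving transversality and Betti numbers, producing an element of $\mathcal{A}_i$. With this correction your argument goes through.
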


\begin{proof}The proof is divided in two parts, first we prove the upper bound, using the square-root law from Theorem \ref{thm:sqrlaw}, then the we use Theorem \ref{thm:Ebetti} to deduce the lower bound. The globalization step for the lower bound is a generalization of the so-called ``barrier method'' from \cite{NazarovSodin2, GaWe1}.

1. Assume $W$ is smooth with codimension $s$. Let us consider 
\be P^{m, k+1}_d|_{S^m}=\Psi_d=(\psi_d,\psi_d^1)\in \g \infty {S^m}{k+1}
\ee
and Let $\hat{W}\subset J^{r+1}(S^m,\R^{k+1})$ be the intrinsic semialgebraic submanifold coming from Theorem \ref{thm:strat} and Remark \ref{rem:strat}. Thus, using Theorems \ref{thm:strat} and \ref{thm:sqrlaw}, we get
\be 
\E\{b_i(Z_d)\}\le N_W\E\#\{(j^{r+1}\Psi_d)^{-1}(\hat{W})\}\le N_WC_{\hat{W}}d^{\frac{m}{2}}.
\ee

2. Consider the embeddings of the $m$ dimensional disk $a_d^R\colon \mathbb{D}^m\hookrightarrow S^m$ defined in \eqref{eq:diskemb}. For any fixed $d\in\N$, choose a finite subset $F_d\subset O(m+1)$ such that the images of the corresponding embeddings $\{a_d^R(\mathbb{D}^m)\}_{R\in F_d}$ are disjoint. Denoting by $Z_d^R$ the union of all connected components of $Z_d$ that are entirely contained in $a_d^R(\mathbb{D}^m)$, we have
\be 
b_i(Z_d)\ge \sum_{R\in F_d}b_i(Z_d^R).
\ee
Let $W_0\subset J^r(\mathbb{D}^m,\R^k)$ be the model of $W$ as an intrinsic submanifold, it is closed and semialgebraic. By Definition \ref{def:intrinsic}, we have
\be\label{eq:pezzi}
(a_d^R)^{-1}\left((j^r\psi_d)^{-1}(W)\right)= \left(j^r(\psi_d\circ a_d^R)\right)^{-1}(W_0)\subset \mathbb{D}^m.
\ee
Recall that for any $R\in O(m+1)$, the GRF $\psi_d\circ a^R_d$ is equivalent to $Y_d\in \g \infty{\mathbb{D}^m}k$ defined in \ref{eq:Y}, hence taking expectation in Equation \eqref{eq:pezzi} we find
\be 
\E \{b_i(Z_d)\}\ge \#(F_d)\E \{b_i(S_d)\},
\ee
where $S_d=\left(j^r(Y_d)\right)^{-1}(W_0)$.
 is easy to see (repeating the same proof) that Theorem \ref{thm:Kostlan}.7 holds also for the sequence $Y_d\nrw Y_\infty$, so that $\E\{S_d\}\to \E\{S_\infty\}$. We can assume that $\E\{S_\infty\}>0$, because of Lemma \ref{lem:positibetti}, thus for big enough $d$, the numbers $\E\{b_i(S_d)\}$ are bounded below by a constant $C>0$.
Now it remains to estimate the number $\#(F_d)$. Notice that $a_d^R(\mathbb{D}^m)$ is a ball in $S^m$ of a certain radius $\e_d$, hence it is possible to choose $F_d$ to have at least $N_m \e_d^{-1}$ elements, for some dimensional constant $N_m>0$ depending only on $m$. We conclude by observing that 
\be 
\e_d\approx d^{-\frac{m}{2}}.
\ee
\end{proof}
\section*{Appendix 1: Examples of applications of Theorem \ref{thm:Kostlan}}\label{sec:examples}
\begin{example}[Zero sets of random polynomials]\label{ex:welsch}Consider the zero set $Z_d\subset \R\P^m$ of a random Kostlan polynomial $P_d=P_{d}^{m+1,1}$. Recently Gayet and Welschinger \cite{GaWe1} have proved that given a compact hypersurface $Y\subset \R^{m}$ there exists a positive constant $c=c(\R^m, Y)>0$ and $d_0=d_0(\R^m,Y)\in \N$ such that for every point $x\in \R\P^m$ and every large enough degree $d\ge d_0$, denoting by $B_d$ any open ball of radius $d^{-1/2}$ in $\R\P^m$, we have:
\be \label{eq:isotopic}\left(B_d, B_d\cap Z_d\right)\cong(\R^m, Y) \ee
(i.e. the two pairs are diffeomorphic) with probability larger than $c$. This result follows from Theorem \ref{thm:Kostlan} as follows. Let $\mathbb{D}^m\subset \R^m$ be the unit disk, and let $U\subset \mathcal{C}^{\infty}(\mathbb{D}^m, \R)$ be the open set consisting of functions $g:\mathbb{D}^m\to \R$ whose zero set is regular (an open $\mathcal{C}^1$ condition satisfied almost surely by $X_d$, because of point (4)), entirely contained in the interior of $\mathbb{D}^m$ (an open $\mathcal{C}^0$ condition) and such that, denoting by $\B\subset \R^m$ the standard unit open ball,  the first two conditions hold and $(\mathbb{B}, \mathbb{B}\cap\{g=0\})$ is diffeomorphic to $(\R^m, Y)$ (this is another open $\mathcal{C}^1$ condition).
Observe that, using the notation above:
\be \left(B_d, B_d\cap Z_d\right)\sim (\mathbb{B}, \mathbb{B}\cap\{X_d=0\}) \ee
(this is simply because $X_d(u)=P_d(1, ud^{-1/2})$). Consequently point (5) of Theorem \ref{thm:Kostlan} implies that:
\begin{align}
\lim_{d\to +\infty}\P\{\eqref{eq:isotopic}\}&=\lim_{d\to \infty} \P\left\{(\mathbb{B}, \mathbb{B}\cap\{X_d=0\})\sim (\R^m, Y)\right\}\\
&=\lim_{d\to \infty} \P\left\{X_d\in U\right\}\\
&=\P\left\{X_\infty \in U \right\}>0.
\end{align}
We stress that, as an extra consequence of Theorem \ref{thm:Kostlan}, compared to \cite{GaWe1} what we get is the existence of the limit of the probability of seeing a given diffeomorphism type.
\end{example}
\begin{example}[Discrete properties of random maps]\label{sec:discrete}
Let $[X_d]\nrw [X_\infty]$ be a converging family of gaussian random fields. In this example we introduce a useful tool for studying the asymptotic probability induced by $X_d$ on discrete sets as $d\to \infty$.
The key example that we have in mind is the case when we consider a codimension-one ``discriminant'' $\Sigma \subset \mathcal{C}^\infty(S^m, \R^k)$  which partitions the set of functions into many connected open sets. For instance $\Sigma$ could be the set of maps for which zero is not a regular value: the complement of $\Sigma$ consists of countably many open connected sets, each one of which corresponds to a rigid isotopy class of embedding of a smooth codimension-$k$ submanifold $Z\subset S^m$. The following Lemma gives a simple technical tool for dealing with these situations.

\begin{lemma}\label{discretelemma}
Let $E$ be a metric space and let $[X_d], [X_\infty]$ be a random fields such that $[X_d]\nrw [X_\infty]$. Let also $Z$ be a discrete space and $\nu \colon U\subset E\to Z$ be a continuous function defined on an open subset $U\subset E$ such that\footnote{Of course, $E\smallsetminus U=\Sigma$ is what we called ``discriminant'' in the previous discussion. Note that we do not require that $\P\{X_d\in U\}=1$, however it will follow that  $\lim_d \P\{X_d\in U\}=1$.} $\P\{X_\infty\in U\}=1$. Then, for any $A\subset Z$ we have:
\be 
\exists \lim_{d\to \infty}\P\left\{X_d\in U,\ \nu(X_d)\in A\right\}=\P\left\{\nu(X_\infty)\in A\right\}.
\ee
\end{lemma}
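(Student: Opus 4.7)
The plan is to reduce the claim to a standard application of the Portmanteau theorem for weak convergence. The key observation is that, because $Z$ is discrete, \emph{every} subset of $Z$ (in particular $A$ and its complement) is clopen, so by continuity of $\nu$ both $V := \nu^{-1}(A)$ and $V' := \nu^{-1}(Z\smallsetminus A)$ are open and closed in $U$. Since $U$ is open in $E$, both $V$ and $V'$ are open subsets of $E$.

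Next I would check that the topological boundary of $V$ inside $E$ is contained in $E\smallsetminus U$. Indeed, if $x\in \partial V$ then $x\notin V$ (as $V$ is open); if moreover $x$ belonged to $U$, then $\nu(x)$ would be defined and would lie either in $A$ (forcing $x\in V$, a contradiction) or in $Z\smallsetminus A$ (so $x\in V'$, an open set disjoint from $V$, contradicting $x\in\overline{V}$). Hence $\partial V\subseteq E\smallsetminus U$, and by hypothesis $\P\{X_\infty\in E\smallsetminus U\}=0$, so $\P\{X_\infty\in \partial V\}=0$.

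At this point the Portmanteau theorem \cite[Theorem 2.1]{Billingsley} applies to the open set $V$: since $\partial V$ has $[X_\infty]$-measure zero and $[X_d]\nrw [X_\infty]$, one has
\be
\lim_{d\to \infty}\P\{X_d\in V\}=\P\{X_\infty\in V\}.
\ee
Using $\P\{X_\infty\in U\}=1$ we rewrite the right-hand side as $\P\{\nu(X_\infty)\in A\}$, and by definition the left-hand side is $\lim_{d\to\infty}\P\{X_d\in U,\ \nu(X_d)\in A\}$. Applying the same reasoning to $A=Z$ shows in particular that $\lim_d\P\{X_d\in U\}=1$, which is the side remark made in the footnote of the statement.

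I do not expect any real obstacle: the only delicate point is the boundary inclusion $\partial V\subseteq E\smallsetminus U$, which is where the clopen structure of subsets of the discrete space $Z$ is genuinely used (if $Z$ were merely Hausdorff, one would only get $\partial V\subseteq \nu^{-1}(\partial A)\cup(E\smallsetminus U)$, and the extra piece would have to be controlled separately). Everything else is the standard Portmanteau argument.
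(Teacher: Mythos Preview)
Your proof is correct and follows essentially the same approach as the paper: both argue that $\nu^{-1}(A)$ is clopen in $U$ by discreteness of $Z$, deduce $\partial(\nu^{-1}(A))\subset E\setminus U$, and then apply Portmanteau using $\P\{X_\infty\in E\setminus U\}=0$. Your version is slightly more detailed in justifying the boundary inclusion, but the idea is identical.
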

\begin{proof}
Since $\nu^{-1}(A)$ is closed and open by continuity of $\nu$, it follows that $\de \nu^{-1}(A)\subset E\backslash U$. Therefore $\P\{X_\infty\in \de \nu^{-1}(A)\}=0$ and by Portmanteau's Theorem \cite[Theorem 2.1]{Billingsley}, we conclude that 
\be \label{discreteq}
\P\{X_d\in \nu^{-1}(A)\}\xrightarrow[d\to\infty]{}\P\{X_\infty\in \nu^{-1}(A)\}, \quad \ \forall \ A\subset Z.
\ee
\end{proof}
Equation \eqref{discreteq}, in the case of a discrete topological space such as $Z$, is equivalent to narrow convergence $\nu(X_d)\nrw \nu(X)$, by Portmanteau's Theorem, because $\de A=\emptyset $ for all subsets $A\subset Z$. Note also that to prove narrow convergence of a sequence of measures on $Z$, it is sufficient to show \eqref{discreteq} for all $A=\{z\}$, indeed in that case the inequality 
\be 
\liminf_{d\to \infty}\P\{\nu_d\in A\}=\liminf_{d\to \infty}\sum_{z\in A}\P\{\nu_d=z\}\ge \sum_{z\in A}\P\{\nu=z\}=\P\{\nu\in A\}
\ee
follows automatically from Fatou's lemma.

Following Sarnak and Wigman \cite{SarnakWigman}, let us consider one simple application of this Lemma. Let $H_{m-1}$ be the set of diffeomorphism classes of smooth closed connected hypersurfaces of $\R^{m}$. Consider $U=\{ f\in \Cr \infty {\mathbb{D}^m}{}\,\colon\, f\transv 0\}$ and let $\nu(f)$ be the number of connected components of $f^{-1}(0)$ entirely contained in the interior of  $\mathbb{D}^m$. For $h\in H_{m-1}$ let $\nu_{h}(f)$ be the number of those components which are diffeomorphic to $h\subset \R^{m}$. In the spirit of \cite{SarnakWigman}, we define the probability measure $\mu(f)\in \mathscr{P}(H_{m-1})$ as 
\be 
\mu(f)=\frac{1}{\nu(f)}\sum_{h\in H_{m-1}}\nu_h(f)\delta_h.
\ee 
Let us consider now the rescaled Kostlan polynomial $X_d:\mathbb{D}^m\to \R$ as in Theorem \ref{thm:Kostlan}. The diffeomorphism type of each internal component of $f^{-1}(0)$ remains the same after small perturbations of $f$ inside $U$, hence $\mu\colon U\to \mathscr{P}(H_{m-1})$ is a locally constant map, therefore by Lemma \ref{discretelemma} we obtain that for any subset $A\subset\mathscr{P}(H_{m-1})$,
\be 
\exists \lim_{d\to \infty}\P\{X_d\in U \text{ and }\mu(X_d)\in A\}=\P\{\mu(X_\infty)\in A\}. 
\ee
Moreover since in this case $X_d\in U$ with $\P=1$, for all $d\in \N$ and the support of $X_\infty$ is the whole $\Cr\infty {\mathbb{D}^m}{}$, we have
\be 
\exists \lim_{d\to \infty}\P\{\mu(X_d)\in A\}=\P\{\mu(X_\infty)\in A\}>0. 
\ee

\end{example}
\begin{example}[Random rational maps] \label{ex:rational}
The Kostlan polynomial $P_d^{m, k+1}$ can be used to define random rational maps. In fact, writing $P_{d}^{m, k+1}=(p_0, \ldots, p_k)$, then one can consider the map $\varphi_{d}^{m,k}:\RP^m\dashrightarrow\RP^k$ defined by:
\be\label{eq:rrm} \varphi_d^{m,k}([x_0, \ldots, x_m])=[p_0(x), \ldots, p_m(x)].\ee
(When $m>k$, with positive probability, this map might not be defined on the whole $\RP^m$; when $m\le k$ with probability one we have that the list $(p_0, \ldots, p_k)$ has no common zeroes, and we get a well defined map $\varphi_{d}^{m,k}:\RP^m\to \RP^k.$) Given a point $x\in \RP^m$ and a small disk $D_d=D(x, d^{-1/2})$ centered at this point, the behavior of $\varphi_{d}^{m,k}|_{D_d}$ is captured by the random field $X_d$ defined in \eqref{eq:rescaledKostlan}: one can therefore apply Theorem \ref{thm:Kostlan} and deduce, asymptotic local properties of this map. 

For example, when $m\le k$ for any given embedding of the unit disk $q:\mathbb{D}^m\hookrightarrow \RP^k$ and for every neighborhood $U$ of $q(\partial \mathbb{D}^m)$ there exists a positive constant $c=c(q)>0$ such that for big enough degree $d$ and with probability larger than $c$ the map 
\be X_d=\varphi_{d}^{m,k}\circ a_d:\mathbb{D}^m\to \RP^k\ee (defined by composing $\varphi$ with the rescaling diffeomorphism $a_d:\mathbb{D}^m\to D_d$) is isotopic to $q$ thorugh an isotopy $\{q_t:\mathbb{D}^m\to \RP^k\}_{t\in I}$ such that $q_t(\partial \mathbb{D}^m)\subset U$ for all $t\in I$.

The random map $\f_d^{m,k}$ is strictly related to the random map $\psi^{m,k}_d\colon S^m\to \R^k$:
\be 
\psi^{m,k}_d(x)=P^{m,k}_d(x),
\ee
which is an easier object to work with. For example the random algebraic variety $\{\f_d=0\}$ is the quotient of $\{\psi_d=0\}$ modulo the antipodal map. If we denote by $D_d$ any sequence of disks of radius $d^{-\frac{1}{2}}$ in the sphere, then $\psi_d|_{D_d}\approx X_d$, so that we can understand the local asymptotic behaviour of $\psi_d$ using Theorem \ref{thm:Kostlan} (see Figure \ref{fig:rescale}).
For instance, from point $(7)$ it follows that 
\be 
\E\left\{b_i\left(\left\{\psi_d=0\right\}\cap D_d\right)\right\}\to \E\left\{b_i\left(\left\{X_\infty=0\right\}\cap \mathbb{D}^m\right)\right\}.
\ee
\end{example}

\begin{example}[Random knots]
\label{ex:knots}
Kostlan polynomials offer different possible ways to define a ``random knot''. The first is by considering a random rational map:
\be \varphi_{d}^{1,3}:\RP^1\to \RP^3,\ee
to which the discussion from Example \ref{ex:rational} applies.
(Observe that this discussion has to do with the \emph{local} structure of the knot.)

Another interesting example of random knots, with a more global flavour, can be obtained as follows. Take the random Kostlan map $X_d: \R^2\to \R^3$ (as in \eqref{eq:rescaledKostlan} with $m=2$ and $k=3$) and restrict it to $S^1=\partial \mathbb{D}^m$ to define a random knot:
\be k_d=X_{d}|_{\partial \mathbb{D}^m}:S^1\to \R^3.\ee
The difference between this model and the previous one is that this is global, in the sense that as $d\to \infty$ we get a limit global model $k_\infty=X_\infty|_{\partial D}:S^1\to \R^3$.  
What is interesting for this model is that the Delbruck--Frisch--Wasserman conjecture \cite{delbruck, frischwasserman}, that a typical random knot is non-trivial, does not hold: in fact $k_\infty$ charges every knot (included the unknot) with positive probability.
\begin{prop}The random map:
\be k_d=X_d|_{\partial \D^2}:S^1\to \R^3.\ee
is almost surely a topological embedding (i.e. a knot).
Similarly, the random rational map $\varphi_{d}^{1,3}:\RP^1\to \RP^3$ is almost surely an embedding.
\end{prop}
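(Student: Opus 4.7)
The plan is to reduce the embedding statement for $k_d:S^1\to \R^3$ to two independent claims: (i) $k_d$ is almost surely an immersion, and (ii) $k_d$ is almost surely injective. Since $S^1$ is compact and $\R^3$ is Hausdorff, an injective immersion $S^1\to\R^3$ is automatically a smooth embedding, so these two claims suffice. Claim (i) is a direct application of Theorem~\ref{thm:Kostlan}(4); claim (ii) requires an additional two-point argument on the evaluation of $X_d$.

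For (i), I would consider the smooth submanifold
\be
\widetilde{W}\;:=\;\bigl\{\, j^1_p g\in J^1(\R^2,\R^3)\,:\, p\in S^1,\ (dg_p)|_{T_pS^1}=0\,\bigr\}\subset J^1(\R^2,\R^3),
\ee
defined by requiring that the base point lie on $S^1$ (codimension $1$) and that the differential annihilate $T_pS^1$ (an additional codimension $3$, since $T_pS^1$ is one-dimensional and the target is $\R^3$), so that $\mathrm{codim}\,\widetilde{W}=4$. Theorem~\ref{thm:Kostlan}(4) yields $j^1X_d\pitchfork\widetilde{W}$ almost surely, and since $\dim\R^2=2<4$, transversality forces $(j^1X_d)^{-1}(\widetilde{W})=\emptyset$, which is exactly the statement that $dk_d$ is nowhere vanishing on $S^1$.

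For (ii), I would consider the Gaussian random field $\Psi\colon (S^1\times S^1)\setminus \Delta_{S^1}\to\R^3$ given by $\Psi(x,y):=X_d(x)-X_d(y)$. Using the explicit covariance $K_{X_d}(u,v)=(1+u^Tv/d)^d I_3$, a direct computation shows that for all $(x,y)$ in the source (so $x,y\in S^1$ with $x\neq y$, hence $|x|=|y|=1$ and $-1\le x^Ty<1$) the $2\times 2$ scalar Gram matrix of $\{X_d(x),X_d(y)\}$ has strictly positive determinant, so the joint Gaussian vector $(X_d(x),X_d(y))\in\R^6$ is non-degenerate and $\mathrm{supp}(\Psi(x,y))=\R^3$. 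Arguing exactly as in the proof of Theorem~\ref{thm:Kostlan}(4) (the hypothesis for the general transversality result from \cite{DTGRF1} is precisely pointwise full support), one obtains $\Psi\pitchfork \{0\}$ almost surely. Since $\dim((S^1\times S^1)\setminus\Delta_{S^1})=2<3=\mathrm{codim}\,\{0\}$, transversality forces $\Psi^{-1}(0)=\emptyset$ almost surely, which is exactly the injectivity of $k_d$.

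For the rational map $\varphi_d^{1,3}:\RP^1\to\RP^3$ the argument is structurally identical. First, the common zero locus of the four homogeneous components of $P_d^{2,4}$ on $\R^2\setminus\{0\}$ is a codimension-$4$ condition on a $2$-dimensional source and hence empty almost surely by Theorem~\ref{thm:Kostlan}(4), so $\varphi_d^{1,3}$ is well-defined on all of $\RP^1$. Immersion is then handled by a jet-transversality step analogous to (i), and injectivity by the two-point ``non-proportionality'' field $(x,y)\mapsto P_d^{2,4}(x)\wedge P_d^{2,4}(y)\in\Lambda^2\R^4$, whose off-diagonal vanishing is ruled out by the same codimension count (the parallel-pair locus is codimension $3$ in the target). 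The main obstacle is making the transversality step in (ii) fully rigorous: Theorem~\ref{thm:Kostlan}(4) is stated only for the single-point jet of $X_d$, whereas $\Psi$ is a two-point evaluation field. One must therefore either invoke a multi-jet version of the probabilistic transversality from \cite{DTGRF1} (for which the non-degeneracy computed above is the essential hypothesis), or verify directly via a Kac--Rice / Bulinskaya estimate that the expected measure of $\Psi^{-1}(0)$ vanishes because the source has dimension strictly smaller than the target.
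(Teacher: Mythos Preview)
Your proof is correct and the injectivity step is essentially the same idea as the paper's, but there are two differences worth noting.

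First, the immersion step (i) is unnecessary for the statement as written: the proposition asks only for a \emph{topological} embedding, and since $S^1$ is compact and $\R^3$ Hausdorff, a continuous injection $S^1\to\R^3$ is automatically a homeomorphism onto its image. The paper accordingly proves only injectivity. (Your immersion argument is fine and gives the stronger conclusion that $k_d$ is a smooth embedding, but it is not needed here.)

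Second, for injectivity the paper avoids entirely the multi-jet issue you flag at the end. Instead of applying a probabilistic transversality theorem to the two-point Gaussian field $\Psi(x,y)=X_d(x)-X_d(y)$, it works directly with the finite-dimensional parameter space $F_d=\R[x_0,x_1,x_2]_{(d)}^3$ and the deterministic map
\[
\phi\colon \bigl((S^1\times S^1)\setminus\Delta\bigr)\times F_d\to\R^3,\qquad \phi(x,y,P)=P\Bigl(1,\tfrac{x}{\sqrt d}\Bigr)-P\Bigl(1,\tfrac{y}{\sqrt d}\Bigr),
\]
checks that $\phi\pitchfork\{0\}$ (trivial, since for fixed $x\neq y$ the map $P\mapsto\phi(x,y,P)$ is linear and onto), and then invokes the classical parametric transversality theorem. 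This yields, for almost every $P$, that $\phi(\cdot,\cdot,P)\pitchfork\{0\}$, and the same codimension count ($2<3$) finishes. Your Gram-matrix computation showing $\mathrm{supp}(\Psi(x,y))=\R^3$ is exactly the content of ``$\phi\pitchfork\{0\}$'' rephrased probabilistically, so the two arguments are the same at heart; the paper's version just sidesteps the need for any multi-jet or Bulinskaya machinery by exploiting that $F_d$ is finite-dimensional.
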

\begin{proof}We prove the statement for $k_d$, the case of $\varphi_d^{1, 3}$ is similar.
Since $S^1$ is compact, it is enough to prove that $k_d$ is injective with probability one.

Let $F_d=\R[x_0, x_1, x_2]_{(d)}^3$ be the space of triples of homogeeous polynomials of degree $d$ in $3$ variables. Recall that $k_d=X_d|_{\partial \mathbb{D}^2}$, where, if $P\in F_d$, we have set:
\be X_d(u)=P\left(1, \frac{u}{\sqrt{d}}\right),\quad u=(u_1, u_2)\in \R^2.\ee
Let now $S^1=\partial \mathbb{D}^2\subset \R^2$ and $\phi:\left((S^1\times S^1)\backslash \Delta\right)\times F_d\to \R^3$ be the map defined by
\be \phi(x,y, P)=P\left(1, \frac{x}{\sqrt{d}}\right)-P\left(1, \frac{y}{\sqrt{d}}\right).\ee
Observe that $\phi\pitchfork \{0\}.$ By the parametric transversality theorem we conclude that $\phi$ is almost surely transversal to $W=\{0\}$. This imples that, with probability one, the set 
\be \{x\neq y\in S^1\times S^1\,|\, k_d(x)=k_d(y)\}\ee
is a codimension-three submanifold of $S^1\times S^1$ hence it is empty, so that $k_d$ is injective. 
\end{proof}

Theorem \ref{thm:Kostlan} implies now that the random variable $k_d\in C^\infty(S^1, \R^3)$ converges narrowly to $k_\infty\in C^{\infty}(S^1, \R^3)$, which is the restriction to $S^1=\partial \mathbb{D}^2$ of $X_\infty.$ Note that, since the support of $X_\infty$ is all $C^\infty(\mathbb{D}^2, \R^3)$, it follows that the support of $k_\infty$ is all $C^\infty(S^1, \R^3)$ and in particular every knot (i.e. isotopy class of topological embeddings $S^1\to \R^3$, a set with nonempty interior in the $C^\infty$ topology) has positive probability by Theorem \ref{thm:Kostlan}.3. Moreover, denoting by $\gamma_1\sim \gamma_2$ two isotopic knots, we have that
\be 
\P\left(\de\{k_\infty\sim \gamma\}\right)\le\P\{k_\infty \text{ is not an immersion}\}=0
\ee
by Theorem \ref{thm:Kostlan}.4, because the condition of being an immersion is equivalent to that of being transverse to the zero section of $J^1(S^1,\R^3)\to S^1\times \R^3$. 
Theorem \ref{thm:Kostlan}.2, thus implies that for every knot $\gamma:S^1\to \R^3$ we have:
\be \lim_{d\to \infty}\PP \{k_d\sim \gamma\}=\PP\{k_\infty\sim \gamma\}>0.\ee
 
\end{example}

\bibliographystyle{plain}

\bibliography{Maximal_and_Typical_Topology_of_Real_Polynomial_Singularities}

\end{document}